\tikzset{cross/.style={cross out, draw=black, fill=none, minimum size=2*(#1-\pgflinewidth), inner sep=0pt, outer sep=0pt}, cross/.default={2pt}}
\newtheorem{theorem}{Theorem}
\numberwithin{theorem}{section}
\newtheorem{lemma}[theorem]{Lemma}
\newtheorem{proposition}[theorem]{Proposition}
\newtheorem{corollary}[theorem]{Corollary}
\newtheorem{thmintro}{Theorem}
\theoremstyle{remark}
\newtheorem{rmk}[theorem]{Remark}
\theoremstyle{definition}
\newtheorem{definition}[theorem]{Definition}
\newcommand{\R}{\mathbb{R}}
\newcommand{\Z}{\mathbb{Z}}
\newcommand{\Q}{\mathbb{Q}}
\newcommand{\Hy}{\mathbb{H}}
\newcommand{\what}{\widehat}
\newcommand{\til}{\widetilde}
\newcommand{\Mtil}{\what{M}}
\newcommand{\dinf}{\partial_\infty}
\DeclareMathOperator{\Isom}{Isom}
\title[The Normal growth exponent of a Codimension-1 hypersurface]{The normal growth exponent of a codimension-1 hypersurface of a negatively curved manifold }
\author{Corey Bregman}
\address{University of Southern Maine, Portland, ME USA}
\email{corey.bregman@maine.edu}
\author{Merlin Incerti-Medici}
\address{IHES, France}
\email{merlin.medici@gmail.com}
\begin{document}
\maketitle

\begin{abstract}
Let $X$ be a Hadamard manifold  with pinched negative curvature $-b^2\leq\kappa\leq -1$. Suppose $\Sigma\subseteq X$ is a totally geodesic, codimension-1 submanifold and consider the geodesic flow $\Phi^\nu_t$ on $X$ generated by a unit normal vector field $\nu$ on $\Sigma$.  
We say the normal growth exponent of $\Sigma$ in $X$ is at most $\beta$ if
\[ \lim_{t \rightarrow \pm \infty} \frac{ \Vert d \Phi_t^\nu \Vert_{\infty} }{ e^{\beta \vert t \vert}} < \infty, \]
where $\Vert d \Phi_t^\nu \Vert_{\infty} $ is the supremum of the operator norm of $d \Phi_t^\nu $ over all points of $\Sigma$.
We show that if $\Sigma$ is bi-Lipschitz to hyperbolic $n$-space $\Hy^n$ and the normal growth exponent is at most 1, then $X$ is bi-Lipschitz to $\Hy^{n+1}$. As an application, we prove that if $M$ is a closed, negatively curved $(n+1)$-manifold, and $N\subset M$ is a totally geodesic, codimension-1 submanifold that is bi-Lipschitz to a hyperbolic manifold and whose normal growth exponent is at most 1, then $\pi_1(M)$ is isomorphic to a lattice in $\Isom(\Hy^{n+1})$. Finally, we show that the assumption on the normal growth exponent is necessary in dimensions at least 4.  
\end{abstract}

\tableofcontents

\section{Introduction}
Let $X$ be a Hadamard manifold with pinched negative curvature $-b^2\leq \kappa\leq -1$. In this paper, we consider a totally geodesic, codimension-1 hypersurface $\Sigma\subseteq X$, and study the dynamical properties of the geodesic flow on $X$ generated by a unit normal vector field to $\Sigma$. 

\subsection{The normal growth exponent}Let $\nu$ be a unit normal vector to $\Sigma$, and let $\Phi^\nu_t$ be the geodesic flow on $X$ generated by $\nu$.  We define the \emph{normal growth exponent} of $\Sigma$ in $X$ to be at most $\beta$ if
\[ \lim_{t \rightarrow \pm \infty} \frac{ \Vert d \Phi_t^\nu \Vert_{\infty} }{ e^{\beta \vert t \vert}} < \infty, \]
where $\Vert d \Phi_t^\nu \Vert_{\infty} $ is the supremum of the operator norm of $d \Phi_t^\nu $ over all points of $\Sigma$. The normal growth exponent measures the distortion on a normal push-out of $\Sigma$ as one moves farther and farther away. If $X$ is real hyperbolic $n$-space $\Hy^{n}$, an easy calculation shows that the normal growth exponent of a totally geodesic copy of $\Hy^{n}$ is 1. Much of our motivation for this work came from a desire to understand this example.  We obtain the following characterisation:

\begin{thmintro}\label{thm:MainNoPi1}
Let $X$ be a Hadamard manifold with pinched negative curvature in $-b^2\leq \kappa\leq -1$ and suppose $\Sigma\subseteq X$ is a totally geodesic, codimension-1 submanifold. If the normal growth exponent of $\Sigma$ in $X$ is at most 1 and $\Sigma$ is bi-Lipschitz to $\Hy^n$, then $X$ is bi-Lipschitz to $\Hy^{n+1}$.
\end{thmintro}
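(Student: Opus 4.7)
The plan is to put $X$ in Fermi coordinates about $\Sigma$ via the normal exponential map and to pinch the resulting warped metric between a lower Jacobi field bound (from Rauch comparison with the upper curvature bound $\kappa\le -1$) and an upper Jacobi field bound (from the normal growth hypothesis). Combined with the bi-Lipschitz identification $\Sigma\sim \Hy^n$, this will show that $X$ in Fermi coordinates is bi-Lipschitz equivalent to $\Hy^{n+1}$ in Fermi coordinates about a totally geodesic hyperplane.

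Since $X$ is Hadamard and $\Sigma$ is totally geodesic, the normal exponential map $F\colon \Sigma\times \R\to X$, $F(p,t)=\exp_p(t\,\nu(p))$, is a diffeomorphism, and the Gauss lemma gives $F^{\ast}g_X = dt^2 + g_{\Sigma,t}$, where $g_{\Sigma,t}(v,w) = \langle d\Phi_t^\nu(v), d\Phi_t^\nu(w)\rangle$ for $v,w\in T_p\Sigma$. Because the second fundamental form of $\Sigma$ vanishes, $t\mapsto d\Phi_t^\nu(v)$ is the Jacobi field $J_v$ along $t\mapsto F(p,t)$ with $J_v(0)=v$ and $J_v'(0)=0$. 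Rauch comparison with $\kappa\le -1$ yields the uniform lower bound $|J_v(t)|\ge \cosh(t)\,|v|$, while the normal growth hypothesis (combined with the Rauch upper bound $|J_v(t)|\le \cosh(b|t|)|v|$ coming from $\kappa\ge -b^2$, which absorbs bounded $t$) produces a constant $C$ with $|J_v(t)|\le Ce^{|t|}|v|$ for all $t\in\R$ and all $(p,v)$. Using the elementary inequality $\cosh(t)\le e^{|t|}\le 2\cosh(t)$, these bounds combine to
\[ \cosh^2(t)\, g_\Sigma(v,v) \;\le\; g_{\Sigma,t}(v,v) \;\le\; 4C^2\cosh^2(t)\, g_\Sigma(v,v) \]
uniformly in $p\in\Sigma$, $v\in T_p\Sigma$, and $t\in\R$, where $g_\Sigma := g_{\Sigma,0}$ is the induced Riemannian metric on $\Sigma$.

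To finish, fix a bi-Lipschitz map $\phi\colon\Sigma\to \Hy^n$ (smoothing if necessary), so that $\phi^{\ast}g_0$ is bi-Lipschitz to $g_\Sigma$, where $g_0$ is the hyperbolic metric on $\Hy^n$. In Fermi coordinates on $\Hy^{n+1}$ about a totally geodesic $\Hy^n$, the hyperbolic metric reads $dt^2 + \cosh^2(t)\,g_0$. Define $\Phi\colon X\to \Hy^{n+1}$ in these coordinates by $F(p,t)\mapsto (\phi(p),t)$. The displayed pinching together with the bi-Lipschitz estimate on $\phi$ shows that $\Phi^{\ast}g_{\Hy^{n+1}} = dt^2 + \cosh^2(t)\,\phi^{\ast}g_0$ is bi-Lipschitz equivalent to $F^{\ast}g_X = dt^2 + g_{\Sigma,t}$ on $\Sigma\times\R$, whence $\Phi$ is a bi-Lipschitz diffeomorphism $X\to \Hy^{n+1}$.

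\textbf{Main obstacle.} The delicate point is that both Jacobi field estimates must be uniform over the basepoint $p\in\Sigma$: the supremum in the definition of $\|d\Phi_t^\nu\|_\infty$ is engineered to deliver this uniformity for the upper bound, and the Rauch lower bound from $\kappa\le -1$ is automatically universal, so the pinching closes. A subsidiary subtlety is that the bi-Lipschitz hypothesis on $\Sigma$ is a priori a metric-space condition, so one may need to smooth $\phi$ or phrase the final conclusion in the category of metric spaces; either way, the output matches the statement of the theorem.
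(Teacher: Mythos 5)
Your proof is correct, and it takes a genuinely different---and in some respects cleaner---route than the paper's. The paper bounds $\|d\Phi_t^\nu\|$ from above and below using the Anosov-type splitting $TSX=\R{\bf X}\oplus E_s\oplus E_u$ and the estimates of Knieper (their Lemmas \ref{lem:CurvatureBoundyieldsNormalGrowthBound} and \ref{lem:CurvatureBoundyieldsLowerBound}), then translates these into distance comparisons via path-length integrals (Lemmas \ref{lem:NormalGrowthInequality} and \ref{lem:LowerBound}). You bypass the Anosov machinery entirely: since $\Sigma$ is totally geodesic, the Jacobi fields arising from the normal flow satisfy $J_v'(0)=0$, and so the classical Berger/Rauch comparison with the curvature pinching $-b^2\le\kappa\le -1$ gives $\cosh(t)\,|v|\le|J_v(t)|\le\cosh(b|t|)\,|v|$ directly, uniformly in the basepoint. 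The rest of the argument is the same quadratic pinching of the warped metric in Fermi coordinates; you phrase it at the level of the Riemannian metric tensor, the paper at the level of path-lengths, which amounts to the same thing.

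Two points worth noting. First, your route to the lower bound $|J_v(t)|\ge\cosh(t)|v|$ requires no cocompact group action on $\Sigma$; the paper's lower bound (Lemma \ref{lem:CurvatureBoundyieldsLowerBound}) does assume such an action (it is used to turn pointwise constants in the $E_s\oplus E_u$ decomposition into uniform ones), and this hypothesis is carried into Lemma \ref{lem:LowerBound} and Corollary \ref{cor:BiLipschitz}, even though Theorem \ref{thm:MainNoPi1} is stated without it. Your Rauch-based argument is therefore a better literal match for the statement being proved. Second, the Berger comparison you invoke---the variant for Jacobi fields with $J'(0)=0$ rather than $J(0)=0$---does apply globally here because Hadamard manifolds have no focal points along normal geodesics to a totally geodesic hypersurface; it would be worth stating this explicitly. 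Your handling of the two technical subtleties (producing a uniform $C$ by letting the Rauch upper bound $\cosh(b|t|)$ cover the compact range of $t$ not controlled by the normal growth limit, and the remark that the bi-Lipschitz map $\phi\colon\Sigma\to\Hy^n$ need not a priori be smooth) is correct; for the latter, one can either smooth $\phi$ or simply observe that the metric-space bi-Lipschitz estimate survives the final composition with $(p,t)\mapsto(\phi(p),t)$.
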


 Note that when $\Sigma$ admits a cocompact action by isometries, the normal growth exponent is at most $b$ (cf. Lemmas \ref{lem:CurvatureBoundyieldsNormalGrowthBound} and \ref{lem:CurvatureBoundyieldsLowerBound}).  Now let $M$ be a closed, negatively curved Riemannian manifold and suppose $N\looparrowright M$ is an immersed, totally geodesic submanifold.  The immersion lifts to a convex embedding of universal covers $\widetilde{N}\hookrightarrow \widetilde{M}$. In this setting, we define the normal growth exponent of $N$ in $M$ to be the normal growth exponent of $\widetilde{N}$ in $\widetilde{M}$. As a consequence of Theorem \ref{thm:MainNoPi1}, we thus also get

\begin{thmintro}\label{thm:Main Cocompact}
Let $M$ be a closed, negatively curved Riemannian manifold.  Suppose that $N\subseteq M$ is a totally geodesic, codimension-1 submanifold whose normal growth exponent is at most 1.  If $N$ is bi-Lipschitz to a manifold of constant negative curvature, then $M$ is homotopy equivalent to a manifold of constant negative curvature.
\end{thmintro}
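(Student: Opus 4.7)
The plan is to reduce Theorem~\ref{thm:Main Cocompact} to Theorem~\ref{thm:MainNoPi1} by passing to universal covers, and then to promote the resulting bi-Lipschitz equivalence to a homotopy equivalence by invoking quasi-isometric rigidity of $\Hy^{n+1}$.

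First I would lift everything. Since $M$ is closed and negatively curved, $\widetilde{M}$ is a Hadamard manifold whose sectional curvature is bounded between two negative constants; after rescaling the metric we may assume $-b^2 \leq \kappa \leq -1$. Let $\widetilde{N}$ denote a fixed component of the preimage of $N$; by the convex embedding statement recalled just before Theorem~\ref{thm:Main Cocompact}, $\widetilde{N}$ is an embedded totally geodesic codimension-1 submanifold of $\widetilde{M}$. By the definition adopted in the excerpt, the normal growth exponent of $\widetilde{N}$ in $\widetilde{M}$ equals that of $N$ in $M$, hence is at most $1$. A bi-Lipschitz map $N \to H$ to a closed hyperbolic $n$-manifold is in particular a homeomorphism, so it induces an isomorphism $\pi_1(N) \cong \pi_1(H)$ and lifts to a bi-Lipschitz map $\widetilde{N} \to \widetilde{H} = \Hy^n$. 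Theorem~\ref{thm:MainNoPi1} now applies and yields a bi-Lipschitz equivalence $F \colon \widetilde{M} \to \Hy^{n+1}$.

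The deck action of $\pi_1(M)$ on $\widetilde{M}$ is free, properly discontinuous, cocompact and by isometries. Conjugating by $F$, we obtain a cocompact geometric quasi-action of $\pi_1(M)$ on $\Hy^{n+1}$. By quasi-isometric rigidity of real hyperbolic space --- Tukia's conformalization of uniformly quasi-conformal groups on $S^n$ for $n \geq 2$ (and its extensions by Chow and Sullivan), and the convergence group theorem of Tukia, Casson--Jungreis and Gabai for $n = 1$ --- together with the torsion-freeness of $\pi_1(M)$ (coming from its free action on the contractible space $\widetilde{M}$), we conclude that $\pi_1(M)$ is isomorphic to a uniform lattice $\Gamma \leq \Isom(\Hy^{n+1})$. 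The quotient $H' := \Hy^{n+1}/\Gamma$ is a closed manifold of constant curvature $-1$ with $\pi_1(H') \cong \pi_1(M)$. Since both $M$ and $H'$ are aspherical, Whitehead's theorem produces a homotopy equivalence $M \simeq H'$.

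The main obstacle is the quasi-isometric rigidity step. It is classical but dimension-sensitive: for $n \geq 2$ it relies on boundary-analytic tools (the quasi-conformal structure on $S^n$ and the Tukia--Sullivan conformalization of uniformly quasi-conformal groups), while for $n = 1$ it requires the convergence-group characterization of Fuchsian groups on $S^1$. The lifting and Whitehead pieces are formal once the setup is in place, and Theorem~\ref{thm:MainNoPi1} absorbs all the internal geometric content, so the only substantive remaining task is to turn a bi-Lipschitz equivalence of Hadamard manifolds into a group-theoretic isomorphism --- which is precisely what QI-rigidity of $\Hy^{n+1}$ delivers.
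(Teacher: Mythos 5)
Your proposal follows essentially the same route as the paper's: lift to universal covers, apply Theorem~\ref{thm:MainNoPi1} (via Corollary~\ref{cor:BiLipschitz}) to obtain a bi-Lipschitz equivalence $\widetilde{M}\to\Hy^{n+1}$, invoke quasi-isometric rigidity of real hyperbolic space (Theorem~\ref{thm:HyperRigid}) to realize $\pi_1(M)$ as a cocompact lattice in $\Isom(\Hy^{n+1})$, and deduce the homotopy equivalence from asphericity. The paper packages the middle steps into Corollary~\ref{cor:Growth Exponent Bound implies lattice}, but the underlying argument is identical to yours.
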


\begin{rmk}
Let $n$ be the dimension of $M$.  If $n\neq 4$, then $M$ is actually homeomorphic to a manifold of constant negative curvature. This follows from the classification of surfaces in dimension 2, a theorem of Gabai--Meyerhoff--Thurston in dimension 3 \cite{GabaiMeyerhoffThurston03}, and Farrell--Jones' solution to the Borel conjecture for negatively curved manifolds when $n\geq 5$ \cite{FarrellJones89-1,FarrellJones89-2}. 
\end{rmk}

\subsection{Gromov--Thurston Manifolds} One might wonder whether the assumption on the normal growth exponent is necessary in the two theorems above.  Indeed, if $M$ has dimension $\leq 3$ and admits a negatively curved metric then $M$ admits a metric of constant curvature $-1$. This follows from classification of surfaces in dimension 2 and from Perelman's celebrated solution of the Geometrisation conjecture in dimension 3 \cite{Perelman:2002-1,Perelman:2003-1, Perelman:2003-2}.  

In each dimension $n\geq 4$, Gromov and Thurston constructed examples of closed $n$-manifolds which admit metrics of negative curvature but no constant curvature-$(-1)$ metric \cite{GromovThurston87}. In fact, their argument shows that in certain cases, such examples are not even homotopy equivalent a metric with constant negative curvature (see \cite{Kapovich07} for a similar construction).  By results of Sullivan \cite{Sullivan81} and Tukia \cite{Tukia86}, an aspherical manifold $M$ is homotopy equivalent a hyperbolic manifold if and only if the $\pi_1(M)$ is quasi-isometric to $\Hy^n$. Thus, the fundamental groups of Gromov--Thurston manifolds are hyperbolic but not quasi-isometric to $\Hy^n$.

Gromov--Thurston manifolds are constructed by starting with a hyperbolic manifold $M$ and taking a branched cover of along a totally geodesic  codimension-2 submanifold $V$. The singular Riemannian pulled back from $M$ can be smoothed to yield a negatively curved Riemannian manifold.  However, because the branched cover can be of arbitrary degree, Wang's finiteness theorem implies that only finitely many of these manifolds can admit a metric of constant curvature-$(-1)$ \cite{Wang72}. 

For any $\epsilon>0$, Gromov and Thurston construct examples such that the curvature of the smoothed metric lies in $[-1-\epsilon,-1]$.  Their examples also contain many totally geodesic codimension-1 submanifolds whose induced metrics have constant curvature-$(-1)$ (cf. Proposition \ref{prop:W is totally geodesic}).  Since the normal growth exponent lies in the interval $[1,b]$ in the cocompact case, we have

\begin{thmintro} \label{thm:MainGromovThurstonExamples}
For each $n\geq 4$ and any sequence $\epsilon_i\rightarrow 0$ of positive real numbers, there exists a sequence of pairs $(M_i,N_i)$ where $M_i$ is closed $n$-manifold with curvature in the interval $[-1-\epsilon_i,-1]$ and $N_i\subset M_i$ is a totally geodesic, codimension-1 submanifold satisfying
\begin{itemize}
    \item The induced metric on $N$ has constant curvature $\kappa\equiv -1$
    \item The normal growth exponent of $N_i$ in $M_i$ is at most $1+\epsilon_i$.
    \item $\pi_1(M_i)$ is not quasi-isometric to $\Hy^n$.    
\end{itemize}

\end{thmintro}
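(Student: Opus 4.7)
The plan is to realise each $(M_i,N_i)$ as a pair arising from the Gromov--Thurston branched-covering construction, and then to extract the three conclusions from that construction combined with Lemma \ref{lem:CurvatureBoundyieldsNormalGrowthBound} and Proposition \ref{prop:W is totally geodesic}.

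First I would fix an arithmetic closed hyperbolic $n$-manifold $M_0$ that simultaneously contains a totally geodesic codimension-$2$ submanifold $V$ and a totally geodesic codimension-$1$ submanifold $W\supset V$; such an $M_0$ exists in every dimension $n\geq 4$ by starting from the quadratic form $Q=-x_0^2+x_1^2+\dots+x_n^2$ and passing to a congruence cover of $\Hy^n/O(Q,\Z)$ deep enough that both the hyperplane $\{x_n=0\}$ and the codimension-$2$ subspace $\{x_{n-1}=x_n=0\}$ descend to embedded, orientable submanifolds. For each $k\geq 2$ let $\widehat M_k\to M_0$ denote the cyclic $k$-fold branched cover along $V$; the pulled-back hyperbolic metric is smooth off $V$ and has a cone singularity of angle $2\pi k$ transverse to $V$. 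Following Gromov--Thurston \cite{GromovThurston87}, this metric can be smoothed in a fixed tubular neighbourhood of $V$ to produce a smooth Riemannian metric $g_k$ on $\widehat M_k$ with curvature in $[-1-\epsilon(k),-1]$, and (as in refined versions of the construction) one can arrange $\epsilon(k)\to 0$ as $k\to\infty$. Choosing $k_i\to\infty$ with $\epsilon(k_i)<\epsilon_i$ and setting $M_i:=(\widehat M_{k_i},g_{k_i})$ with $N_i$ the preimage of $W$ gives the candidate pairs.

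The three bullets now follow in order. That $N_i$ is totally geodesic with induced metric of constant curvature $-1$ is the content of Proposition \ref{prop:W is totally geodesic}: one smooths equivariantly under the isometric reflection across $W$, so each component of the preimage of $W$ is the fixed-point set of an involutive isometry of $(\widehat M_k,g_k)$ and hence totally geodesic, and since the smoothing is supported transverse to $V$ and commutes with the reflection, the metric induced on $N_i$ agrees with the pullback hyperbolic metric on $W$. The normal growth bound follows from Lemma \ref{lem:CurvatureBoundyieldsNormalGrowthBound}: lifting to universal covers, the curvature lower bound $-1-\epsilon(k_i)$ together with the cocompact $\pi_1(N_i)$-action yields a normal growth exponent at most $\sqrt{1+\epsilon(k_i)}\leq 1+\epsilon_i$. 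Finally, to see that $\pi_1(M_i)$ is not quasi-isometric to $\Hy^n$, I appeal to the standard argument from \cite{GromovThurston87,Kapovich07}: for appropriate choice of $M_0$, $V$ and sufficiently large $k_i$, the manifolds $M_i$ are not homotopy equivalent to any closed hyperbolic $n$-manifold, and so by Sullivan--Tukia their fundamental groups cannot be quasi-isometric to $\Hy^n$.

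The principal obstacle is the first bullet: arranging the smoothing to simultaneously push $\epsilon(k)\to 0$ \emph{and} preserve a totally geodesic, constant-curvature-$(-1)$ codimension-$1$ hypersurface. A generic smoothing would destroy the totally geodesic structure of the preimage of $W$, so the construction must be carried out equivariantly under the $W$-reflection symmetry of the pullback metric; verifying that this can be done compatibly with the curvature control is exactly the content of Proposition \ref{prop:W is totally geodesic}, which I would invoke as a black box here. The remaining ingredients are standard: the Gromov--Thurston construction itself, our curvature-to-normal-growth lemma, Sullivan--Tukia rigidity, and the Gromov--Thurston non-hyperbolicity criterion.
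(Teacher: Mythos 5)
There is a genuine gap in the way you parametrise the construction. You fix a single base manifold $M_0$ and then let the branching degree $k_i\to\infty$, asserting that ``one can arrange $\epsilon(k)\to 0$ as $k\to\infty$.'' This has the dependence exactly backwards. In the Gromov--Thurston smoothing, the curvature pinching constant $C(k,\rho)$ of Lemma \ref{lem:curvature of Mk} depends on both the branching degree $k$ and the normal injectivity radius $\rho$ of $V$ in the base manifold, and what is true is that $C(k,\rho)\to 1$ as $\rho\to\infty$ \emph{for fixed $k$}. If $M_0$ (hence $\rho$) is fixed and $k\to\infty$, the cone angle $2\pi k$ grows without bound, the interpolating function $\sigma$ must bridge the multiplicative gap between $\sinh$ and $k\sinh$ over an interval of bounded length, and the curvature bounds \emph{degrade}, not improve. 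The paper's proof therefore fixes $k\geq 2$ once and for all and instead varies the base manifold $M_i'$, passing to deeper finite-index subgroups $\Gamma\leq\Gamma_n$ to make $\rho$ arbitrarily large (Lemma \ref{lem:epsilon curvature}); the failure to be homotopy equivalent to a hyperbolic manifold already holds for every $k>1$ by Lemma \ref{lem:Not Homotopy Equivalent}, so no largeness of $k$ is required.

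A secondary but real issue is the arithmetic input. The form $Q=-x_0^2+x_1^2+\dots+x_n^2$ over $\Z$ is isotropic over $\Q$ for $n\geq 4$, so $\mathrm{O}(Q,\Z)$ is a \emph{non-cocompact} lattice and no congruence cover of $\Hy^n/\mathrm{O}(Q,\Z)$ is closed. To get a closed manifold carrying the required totally geodesic codimension-$1$ and codimension-$2$ submanifolds, one needs an anisotropic form, e.g.\ the form $I_n=\sum_{i=1}^n x_i^2-\sqrt{2}x_{n+1}^2$ over $\Z[\sqrt{2}]$, as in the paper. The remainder of your outline --- invoking Proposition \ref{prop:W is totally geodesic} for the totally geodesic locally hyperbolic hypersurface, Lemma \ref{lem:CurvatureBoundyieldsNormalGrowthBound} for the normal growth bound $\sqrt{1+\epsilon_i}\leq 1+\epsilon_i$, and Sullivan--Tukia via Corollary \ref{cor:QIequalsHE} for the last bullet --- does track the paper's argument and would be fine once the parametrisation is corrected.
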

Hence, by Theorem \ref{thm:Main Cocompact}, for these examples the normal growth exponent is never at most 1, but gets arbitrarily close.

\subsection{Cubulated groups} For closed manifolds, the existence of a totally geodesic submanifold in a negatively curved manifold often arises from an arithmetic construction. A coarser notion than that of a totally geodesic codimension-1 hypersurface, and one more suited to applications in geometric group theory, is that of a codimension-1 quasi-convex subgroup. If $G$ is a finitely generated group with generating set $S$ and $H\leq G$ is a subgroup, then $H$ is said to be \emph{quasi-convex} if there exists $R>0$ such that any geodesic in the Cayley graph $\Gamma_{G,S}$ between two vertices in $H$ lies within the $R$-neighborhood of $H$.  When $G$ is word-hyperbolic, whether or not $H$ is quasi-convex does not depend on the generating set $S$, and implies that $H$ is itself a word-hyperbolic group. We say that $H$ has \emph{codimension-1} if the coset graph of $\faktor{G}{H}$ has at least two ends at infinity.  

If $M$ is a closed, negatively curved Riemannian manifold and $N\subseteq M$ a totally geodesic submanifold, then $\pi_1(N)$ injects into $\pi_1(M)$ as a quasi-convex subgroup, and if $N$ has codimension-1, then $\pi_1(N)$ has codimension-1 in $\pi_1(M)$.  

Work of Bergeron--Wise has highlighted the importance of having many quasi-convex, codimension-1 subgroups for producing proper, cocompact actions of a hyperbolic group $G$ on CAT(0) cube complexes \cite{BergeronWise12}.  A group is called \emph{cubulated} if it acts properly and cocompactly on a CAT(0) cube complex by isometries. It is not difficult to show directly that hyperbolic surface groups are cubulated. In dimension 3, deep results of Kahn--Markovic produce many quasi-convex surface subgroups in any hyperbolic 3-manifold \cite{KahnMarkovic15}. Combined with the criterion of Bergeron--Wise, this implies every closed hyperbolic 3-manifold group is cubulated \cite{BergeronHaglundWise11}.  

In higher dimensions, it is not known whether every hyperbolic manifold can be cubulated.  For $n\geq 4$, most constructions of hyperbolic $n$-manifolds are either arithmetic or obtained from arithmetic manifolds by cut and paste constructions along totally geodesic, codimension-1 hypersurfaces \cite{GromovPiatetskiShapiro88, Agol06, BelolipetskyThomson11}.  All such constructions therefore contain many totally geodesic codimension-1 submanifolds, and are cubulated \cite{BergeronHaglundWise11}. 

It would be interesting to find either a coarse or CAT(0) cube complex version of the normal growth exponent, and a corresponding version of Theorem \ref{thm:Main Cocompact}. We remark that the Gromov--Thurston examples are cubulated, hence even in this situation some restriction on the normal growth exponent will still be necessary \cite{Giralt17}.

\textbf{Outline.} In \S \ref{sec:Preliminaries}, we review some necessary background on geodesic flows and quasi-isometric rigidity of real hyperbolic space.  We define the normal growth exponent of a totally geodesic hypersurface $\Sigma$ in \S \ref{sec:Normal Growth Estimates}, and bound the exponent in terms of the curvature when $\Sigma$ admits a cocompact group action. In \S \ref{sec:BoundingDistanceswithNormalGrowthExponent}, we prove Theorems \ref{thm:MainNoPi1} and \ref{thm:Main Cocompact}. Finally, we review the Gromov--Thurston examples and prove Theorem \ref{thm:MainGromovThurstonExamples} in \S \ref{sec:GromovThurstonManifolds}.

\textbf{Acknowledgements.} The authors are grateful to Pierre Pansu for bringing the Gromov--Thurston examples to their attention. The second author thanks Fanny Kassel and Thibault Lefeuvre for several discussions about geodesic flows.  The first author was supported by NSF grant DMS-2052801. The second author has been funded by the SNSF grant 194996.

\section{Preliminaries}\label{sec:Preliminaries}
In this section we introduce some important results and notation that will be used in the sequel.  First, we  recall the definition of totally geodesic submanifold and describe some warped product decompositions of real hyperbolic $n$-space $\Hy^n$. We then review some basic results concerning geodesic flows on negatively curved manifolds.  Finally, we discuss a rigidity results for groups quasi-isometric to $\Hy^n$.

\subsection{Totally geodesic submanifolds} \label{subsec:TotallyGeodesicSubmanifolds}
Suppose $(M^{n+1},g)$ is a closed, orientable negatively curved Riemannian manifold with sectional curvature $-b^2\leq \kappa\leq 1$.  Let $X$ denote the universal cover of $M$, which diffeomorphic to $\R^{n+1}$ by the Cartan--Hadamard theorem.  Suppose that $\iota\colon N^n\hookrightarrow M$ is an embedded, orientable submanifold, and let $h=\iota^*(g)$ be the induced Riemannian metric on $N$.

\begin{definition}
$(N,h)$ is \emph{totally geodesic} if any geodesic on $N$ is also a geodesic on $M$.
\end{definition}

Then $N$ is itself a negatively curved Riemannian manifold. If $Y$ is the universal cover of $N$, then there is a convex embedding $Y\hookrightarrow X$. Hence $Y$ is diffeomorphic to $\R^n$ and inclusion $\iota\colon N\rightarrow M$ is $\pi_1$-injective. Suppose $N$ is a codimension-1 submanifold of $M$, and let $\nu$ be the unit normal vector field on $N$ with positive orientation.  Then $N$ being totally geodesic is equivalent to $\nu$ being parallel along $N$, \emph{i.e.} that $\nabla_w\nu=0$ for any $w\in TN$. 

Let $\iota : Y \hookrightarrow X$ be a $C^2$-embedded, totally geodesic, codimension-1 submanifold and $\nu$ a $C^1$-differentiable unit normal vector field on $Y$. We define the {\it normal flow with respect to $Y$}, which is a map $\Phi^\nu_t : Y \rightarrow X$ by $\Phi^\nu_t(q) := \exp_q(t\nu(q))$, where $\exp_q$ denotes the exponential map at $q$. We can define the map $\Psi : \Sigma \times \mathbb{R} \rightarrow X$ by $\Psi(q,t) := \Phi^\nu_t(q)$. Since $\nu$ and the geodesic flow map are $C^1$-differentiable, so is $\Psi$. Furthermore, $\Psi$ is invertible, as we can send $p \in X$ to the pair $(\pi_{\Sigma}(p), d(p,\Sigma))$, where $\pi_{\Sigma}$ denotes the closest-point projection from $X$ to $\Sigma$. The closest point projection is well-defined, as $\Sigma$ is closed and convex and $X$ is a Hadamard manifold. Since the derivative of $\Psi$ is non-degenerate everywhere, we conclude that $\Psi$ is a $C^1$-diffeomorphism. This allows us to introduce coordinates of the form $(q,t) \in \Sigma \times \mathbb{R}$ on $X$. We will frequently interpret points in $X$ under these coordinates and write $p = (q,t)$.

\subsection{Warped product decompositions of $\Hy^n$}
Let $(M,g)$ and $(N,h)$ be  Riemannian manifolds and let $f\colon N\rightarrow (0,\infty)$ be a smooth positive function. Consider the product $M\times N$ with projections $\pi\colon M\times N\rightarrow M$ and $\eta\colon M\times N \rightarrow N$. 

\begin{definition}The \emph{warped product}  $W=M\times_f N$ is $M\times N$ equipped with the Riemannian metric at $T_{(x,y)}(M\times N)$ given by  \[f(y)\pi^* g+\eta^*h.\]
\end{definition}
Let $\Hy^n$ denote the upper half space model $n$-dimensional real hyperbolic space. That is,
\[\Hy^{n}=\{(x_1,\ldots,x_{n-1}, y)\in \R^{n+1}\mid y>0\}.\]
The Riemannian metric on $\Hy^{n}$ is then given by  \[g_{\Hy^n}=\frac{dx_1^2+\cdots +dx_{n-1}^2+dy^2}{y^2}\]

We recall the following two decompositions of $\Hy^n$ as a warped product.  
\begin{proposition}\label{prop:WarpedDecomposition}Let $c(t)=\cosh(t)$.
\begin{enumerate}
    \item For $n\geq 1$, $\Hy^{n}$ is isometric to the warped product $\Hy^{n-1}\times_{c^2} \R$.
    \item For $n\geq 2$, $\Hy^{n}$ is isometric to the warped product $\Hy^{n-2}\times_{c^2} \Hy^{2}$
\end{enumerate} 
\end{proposition}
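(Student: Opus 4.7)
The plan is to establish both warped-product decompositions by working in the hyperboloid model of $\Hy^n$, where geodesics emanating from a totally geodesic submanifold in a normal direction admit a particularly clean closed form, and then directly computing the pullback of the ambient metric under the normal exponential map. Recall the hyperboloid model: $\Hy^n = \{x \in \R^{n+1} : \langle x,x \rangle = -1,\ x_0>0\}$ with the restriction of $-dx_0^2 + dx_1^2+\cdots+dx_n^2$, and that the geodesic from $q \in \Hy^n$ with unit tangent $v \in T_q\Hy^n$ is $\gamma(t) = \cosh(t)\, q + \sinh(t)\, v$.

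For part (1), I would take $\Sigma = \{x_n = 0\}\cap\Hy^n$, which is a totally geodesic copy of $\Hy^{n-1}$, and note that $\nu(q) = e_n$ gives a globally parallel unit normal along $\Sigma$. The normal flow is then $\Phi(q,t) = \cosh(t)\, q + \sinh(t)\, e_n$, which is a $C^\infty$-diffeomorphism $\Sigma \times \R \to \Hy^n$ by the convexity argument discussed in \S\ref{subsec:TotallyGeodesicSubmanifolds}. Computing $\Phi^*g_{\Hy^n}$ directly: the contribution quadratic in $dq$ is $\cosh^2(t)$ times the induced metric on $\Sigma$; the $dq\,dt$ cross term is proportional to $d\langle q,q\rangle$, which vanishes identically on $\Sigma$ by the defining equation of the hyperboloid; and the $dt^2$ coefficient collapses to $\cosh^2(t)\langle e_n,e_n\rangle + \sinh^2(t)\langle q,q\rangle = \cosh^2(t) - \sinh^2(t) = 1$. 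This yields $\cosh^2(t)\, g_\Sigma + dt^2$, which is precisely the warped product $\Hy^{n-1}\times_{c^2}\R$.

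For part (2), the argument is analogous but with a codimension-2 submanifold $\Sigma = \{x_{n-1}=x_n=0\}\cap \Hy^n \cong \Hy^{n-2}$, whose normal bundle has two-dimensional fibers spanned by $e_{n-1}, e_n$. Introducing polar coordinates $(s,\theta)$ on the normal fiber and exponentiating gives
\[
\Psi(q,s,\theta) \;=\; \cosh(s)\, q + \sinh(s)\bigl(\cos\theta\, e_{n-1} + \sin\theta\, e_n\bigr),
\]
again a diffeomorphism onto $\Hy^n$. The same orthogonality trick kills the mixed $dq\,ds$ and $dq\,d\theta$ terms, the $ds\,d\theta$ cross term vanishes by a direct trig calculation, and the remaining calculation yields $\cosh^2(s)\,g_\Sigma + (ds^2 + \sinh^2(s)\, d\theta^2)$. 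Recognising the second summand as the hyperbolic metric on $\Hy^2$ in geodesic polar coordinates identifies this with $\Hy^{n-2}\times_{c^2}\Hy^2$, where the warping function is $\cosh^2$ of the hyperbolic distance from the polar basepoint in the $\Hy^2$-factor.

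There is no real obstacle here beyond bookkeeping. The only subtlety worth flagging is the global injectivity/surjectivity of $\Phi$ and $\Psi$, which is automatic from the Cartan--Hadamard theorem together with the convexity of totally geodesic submanifolds in a Hadamard manifold; the cleanness of the calculation is what makes the hyperboloid model the right choice over the upper half-space model, where the normal geodesics to a vertical hyperplane would appear as Euclidean semicircles and the computation would be considerably more opaque.
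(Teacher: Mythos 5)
Your proof is correct and supplies exactly the ``routine calculation'' that the paper references but omits: you realise the projection onto the first factor as the nearest-point projection onto a totally geodesic $\Hy^{n-1}$ (resp.\ $\Hy^{n-2}$), match the paper's set-up in \S\ref{subsec:TotallyGeodesicSubmanifolds}, and then verify the warped-product form of the metric directly. The hyperboloid model is a well-chosen vehicle for the computation---the orthogonality relations $\langle q, dq\rangle = 0$, $\langle q, e_n\rangle = 0$ and $\langle dq, e_n\rangle = 0$ kill all cross terms at once, and the identity $\cosh^2 - \sinh^2 = 1$ cleans up the radial coefficient---and your recognition of $ds^2 + \sinh^2(s)\,d\theta^2$ as geodesic polar coordinates on $\Hy^2$ correctly identifies the warping function $c^2$ as $\cosh^2$ of the distance to the polar basepoint, matching the paper's formula following Proposition~\ref{prop:WarpedDecomposition}.
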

In each case the projection onto the first factor is the nearest point projection onto a totally geodesic subspace.  The verification of the warped product structure is a routine calculation.  

With respect to polar coordinates $(r,\theta)$ on $\R^2$, the hyperbolic metric can itself be  written as a warped product $dr^2+\sinh^2(r)d\theta^2$.  Thus in the description from Proposition \ref{prop:WarpedDecomposition}(2) above, the metric on $\Hy^n$ may be written.
\begin{equation}
    dr^2+\sinh^2(r)d\theta^2+\cosh^2(r)dx^2
\end{equation}
where $dx^2$ indicates the metric on $\Hy^{n-2}$.

\subsection{Basics of geodesic flows}

Let $(M,g)$ be a smooth, closed Riemannian manifold. Since $M$ is closed it is geodesically complete, i.e.\,every geodesic can be extended forward and backward for all time. Let $\pi : TM \rightarrow M$ be the canonical projection. Now consider $TTM$, the tangent bundle of $TM$. The covariant derivative allows us to define a projection $\mathcal{K}\colon TTM\rightarrow TM$ as follows. Given $w \in TTM$, let $\gamma : (-\epsilon, \epsilon) \rightarrow TM$ be a path such that $w = \gamma'(0)$. In local coordinates, we can write $\gamma(t) = (p(t),v(t))$. In particular, $v$ is a vector field along the path $p(t)=\pi(\gamma(t))$ in $M$. Set $\mathcal{K}(w) := \frac{Dv}{dt}(0) \in TM$.

The Sasaki metric on $TTM$ is defined to be
\[ g_{\text{Sasaki}}(w, w') = g( d\pi(w), d\pi(w') ) + g( \mathcal{K}(w), \mathcal{K}(w') ). \]

Using the Sasaki metric, we obtain a decomposition of $TTM$ into geodesic, vertical, and horizontal components as follows. Define the vertical part by $\mathbb{V}_{(p,v)} = \ker( d\pi(p,v) )$. The geodesic vector field, which we denote by $\bf{X}$, is defined by the geodesic flow, i.e.\,${\bf X}(p,v)$ is represented by the path $(p(\cdot), p'(\cdot))$, where $p(\cdot)$ is the geodesic induced by the vector $v$. By definition, $\bf{X} \in \ker(\mathcal{K})$. Finally, the horizontal bundle is defined to be the orthogonal complement of $\mathbb{R}\bf{X}$ in $\ker( \mathcal{K} )$ with respect to $g_{\text{Sasaki}}$. 

Note that the entire splitting $TTM = \mathbb{R}X \oplus \mathbb{H} \oplus \mathbb{V}$ is orthogonal with respect to the Sasaki metric. Furthermore, letting $SM$ denote the unit sphere bundle of $M$, the tangent bundle $TSM$ of the unit sphere bundle sits naturally inside $TTM$.   Thus, $TSM$ inherits a splitting as above. Observe that the orthogonal complement of $TSM$ in $TTM$ lies in $\mathbb{V}$. By an abuse of notation, the restriction of $\mathbb{V}$ to $TSM$ will also be called $\mathbb{V}$. The restriction of the Sasaki metric to $TSM$  then provides an orthogonal splitting $TSM = \mathbb{R}\bf{X} \oplus \mathbb{H} \oplus \mathbb{V}$.\\

For any geodesically complete Riemannian manifold, we can define a map $\Phi_{\bullet} : \mathbb{R} \times SM \rightarrow SM$ which sends a pair $(t, (p,v) )$ to $\frac{d}{ds}\vert_{s = t}\exp_p(sv)$, i.e.\,$\Phi_t(p,v)$ is the derivative of the geodesic starting at $p$ along $v$ at time $t$. Note that for each $t$, $\Phi_t$ is a diffeomorphism of $SM$ and $\Phi_t^{-1} = \Phi_{-t}$. Our main technical tool in proving Theorem \ref{thm:MainNoPi1} will be control over the operator norm of $d \Phi_t$.

A Riemannian manifold is called {\it Anosov}, if there exists a continuous, flow invariant splitting of $TSM$ of the form
\[ TSM = \mathbb{R}{\bf X} \oplus E_s \oplus E_u, \]
and there exist $C, \lambda > 0$ such that
\[ \forall t \geq 0, \forall w \in E_s : \vert d \Phi_t(w) \vert \leq C e^{-t \lambda} \vert w \vert, \]
\[ \forall t \leq 0, \forall w \in E_u : \vert d \Phi_t(w) \vert \leq C^{- \vert t \vert \lambda} \vert w \vert, \]
where the metric on $TSM$ inducing $\vert \cdot \vert$ can be any metric. (We will care about the Sasaki metric.) Furthermore, one can show that $\mathbb{H} \oplus \mathbb{V} = E_s \oplus E_u$, which implies that $\mathbb{R} \bf{X}$ is orthogonal to $E_s \oplus E_u$. By Anosov, negatively curved manifolds are Anosov \cite{Anosov67}. It is a well-known fact that for negatively curved manifolds, $\mathbb{H}$ and $\mathbb{V}$ intersect $E_s$ and $E_u$ trivially.

\subsection{Rigidity of hyperbolic spaces}
Let $G$ be a finitely generated torsion-free group. In order to conclude that a given group $G$ is isomorphic to cocompact lattice in $\Isom(\Hy^n)$, we will produce a quasi-isometry from the Cayley graph $G$ with respect to some finite generating set to $\Hy^n$.  The following theorem states that under these conditions, $G$ is isomorphic to a lattice 
\begin{theorem}[\cite{Sullivan81} $n=3$, \cite{Tukia86} $n\geq 4$]\label{thm:HyperRigid} Let $G$ be a torsion-free group that is quasi-isometric to $\Hy^n$, $n\geq 3$. Then $G$ admits a discrete, cocompact action on $\Hy^n$ by isometries.

\end{theorem}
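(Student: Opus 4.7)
The plan is to turn the quasi-isometry $f \colon G \to \Hy^n$ into a quasi-action of $G$ on $\Hy^n$, pass to the ideal boundary $\partial_\infty \Hy^n = S^{n-1}$ where this becomes an action by uniformly quasiconformal homeomorphisms, and then invoke the Tukia--Sullivan rigidity theorem for such group actions to conjugate the quasi-action to an honest isometric action.

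Concretely, let me first fix a finite generating set $S$ and a basepoint $o \in \Hy^n$, and produce a quasi-inverse $\bar f \colon \Hy^n \to G$ with $\bar f \circ f$ and $f \circ \bar f$ at bounded distance from the identity. For each $g \in G$, the map $\alpha_g \colon \Hy^n \to \Hy^n$ defined by $\alpha_g(x) := f(g \cdot \bar f(x))$ is a $(K,C)$-quasi-isometry with constants independent of $g$, and satisfies $\alpha_{gh} = \alpha_g \circ \alpha_h$ up to uniformly bounded error. Since left multiplication of $G$ on itself is free and cocompact, the quasi-action $\{\alpha_g\}$ is cobounded (orbits are coarsely dense) and proper (for any bounded set $B$, only finitely many $g$ satisfy $\alpha_g(B) \cap B \neq \emptyset$).

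Next, using that $\Hy^n$ is Gromov hyperbolic, each $\alpha_g$ extends to a homeomorphism $\partial \alpha_g$ of the visual boundary $S^{n-1}$, and the extensions depend only on the coarse equivalence class of $\alpha_g$. The map $g \mapsto \partial \alpha_g$ is then a genuine homomorphism $\rho \colon G \to \mathrm{Homeo}(S^{n-1})$; moreover the standard analysis of boundary extensions of quasi-isometries of $\Hy^n$ shows that each $\partial \alpha_g$ is $K'$-quasiconformal on $S^{n-1}$ with $K'$ depending only on the quasi-isometry constants $(K,C)$. Coboundedness of the quasi-action translates into the action on $S^{n-1}$ being cocompact on the space of triples of distinct points, and properness translates into discreteness of the action on triples. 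At this point the key input is the Sullivan--Tukia theorem on uniformly quasiconformal groups on $S^{n-1}$ for $n-1 \geq 2$: such a group with this triple-transitivity property is quasiconformally conjugate to a subgroup of the Möbius group $\mathrm{M\ddot{o}b}(S^{n-1}) \cong \Isom(\Hy^n)$. Conjugating by this quasiconformal homeomorphism, we obtain a homomorphism $\rho' \colon G \to \Isom(\Hy^n)$.

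Finally I would check that $\rho'$ inherits properness and coboundedness (discreteness and cocompactness of the image) from the corresponding properties of the original quasi-action, and that injectivity of $\rho'$ follows because $G$ is torsion-free and a nontrivial kernel would have to be a torsion subgroup (any $g$ with $\partial \alpha_g = \mathrm{id}$ satisfies $d(\alpha_g(x), x) \leq C'$ for all $x$, and properness then forces $g$ to have finite order). The main obstacle, and the place where the hypothesis $n \geq 3$ is essential, is the Sullivan--Tukia quasiconformal rigidity step: it requires the measurable Riemann mapping theorem (Sullivan, $n=3$) or Tukia's extension thereof to higher dimensions, both of which rely on analytic tools that have no analogue for $S^1 = \partial \Hy^2$. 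The rest of the argument — setting up the quasi-action, extending to the boundary, and extracting discreteness and cocompactness — is formal once the rigidity theorem is in hand.
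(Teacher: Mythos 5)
The paper does not prove this theorem; it is stated as a citation to Sullivan and Tukia and used as a black box (e.g., in Corollary~\ref{cor:QIequalsHE}). Your sketch is a correct summary of the argument behind those cited results: upgrade the quasi-isometry to a cobounded, proper quasi-action on $\Hy^n$, extend to a genuine action on $\partial_\infty \Hy^n = S^{n-1}$ by uniformly quasiconformal homeomorphisms, apply quasiconformal rigidity to conjugate into $\Isom(\Hy^n)$, and then read off discreteness, cocompactness, and (using torsion-freeness) injectivity. The one phrase worth tightening is the description of Tukia's theorem as an ``extension'' of the measurable Riemann mapping theorem: there is no analogue of the Ahlfors--Bers theorem in dimension $\geq 3$, and Tukia's proof uses a genuinely different mechanism (a.e.\ differentiability of quasiconformal maps, a radial-limit/zooming argument to extract an invariant conformal structure, and then rigidity for conformal structures), together with the hypothesis that the action on triples of distinct boundary points is cocompact --- which, as you correctly note, is exactly what coboundedness of the quasi-action supplies and is essential because there exist uniformly quasiconformal groups on $S^{n-1}$ with $n-1 \geq 3$ that are \emph{not} conjugate into the M\"obius group. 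With that caveat your outline is faithful to the proofs of Sullivan (for $n=3$) and Tukia (for $n\geq 4$).
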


As a corollary, we have

\begin{corollary}\label{cor:QIequalsHE}
Let $M$ be an aspherical closed $n$-manifold, $n\geq 3$.  Then $M$ is homotopy equivalent to a compact quotient of $\Hy^n$ by a discrete torsion-free subgroup if and only if the universal cover $\widetilde{M}$ is quasi-isometric to $\Hy^n$.
\end{corollary}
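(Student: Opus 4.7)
The plan is to derive the corollary as a fairly direct consequence of Theorem \ref{thm:HyperRigid} (Sullivan--Tukia) together with standard facts about aspherical manifolds and the \v{S}varc--Milnor lemma.

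For the forward direction, suppose $M$ is homotopy equivalent to $\Hy^n/\Gamma$ for some discrete, torsion-free, cocompact $\Gamma\leq \Isom(\Hy^n)$. Then $\pi_1(M)\cong \Gamma$ acts geometrically on $\Hy^n$, so by the \v{S}varc--Milnor lemma $\pi_1(M)$ is quasi-isometric to $\Hy^n$. Since $M$ is closed, $\pi_1(M)$ also acts geometrically on the universal cover $\widetilde{M}$, so $\widetilde{M}$ is quasi-isometric to $\pi_1(M)$, and hence to $\Hy^n$.

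For the reverse direction, assume $\widetilde{M}$ is quasi-isometric to $\Hy^n$. The deck action of $\pi_1(M)$ on $\widetilde{M}$ is again geometric, so by \v{S}varc--Milnor, $\pi_1(M)$ is quasi-isometric to $\widetilde{M}$, and hence to $\Hy^n$. Since $M$ is aspherical and a closed manifold, $M$ is a finite-dimensional $K(\pi_1(M),1)$; this forces $\pi_1(M)$ to be torsion-free (a nontrivial finite subgroup would have infinite cohomological dimension). Theorem \ref{thm:HyperRigid} then provides a discrete cocompact action of $\pi_1(M)$ on $\Hy^n$ by isometries. Torsion-freeness upgrades this discrete action to a free action, since any point stabilizer in a discrete isometric action on $\Hy^n$ is finite. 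Letting $\Gamma$ denote the image of $\pi_1(M)$ in $\Isom(\Hy^n)$, the quotient $\Hy^n/\Gamma$ is then a closed aspherical manifold with fundamental group isomorphic to $\pi_1(M)$. Since both $M$ and $\Hy^n/\Gamma$ are $K(\pi_1(M),1)$ spaces, they are homotopy equivalent by the standard uniqueness of classifying spaces up to homotopy type.

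There is essentially no obstacle here: the substantive content is entirely packaged into Theorem \ref{thm:HyperRigid}. The only small care needed is in verifying torsion-freeness of $\pi_1(M)$ (which follows from asphericity and finite dimensionality) and in passing from a discrete cocompact action to a free one (which uses torsion-freeness together with the fact that isotropy groups for discrete actions on Hadamard manifolds are finite). The identification of $M$ with $\Hy^n/\Gamma$ up to homotopy is then immediate from the $K(\pi,1)$ property.
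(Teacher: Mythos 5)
Your proof is correct and takes essentially the same route as the paper: both directions reduce to Theorem~\ref{thm:HyperRigid}, torsion-freeness from finite-dimensional asphericity, and the $K(\pi,1)$ uniqueness argument. The only cosmetic difference is that for the forward direction the paper lifts a homotopy equivalence to a proper homotopy equivalence of universal covers, whereas you invoke the \v{S}varc--Milnor lemma twice; these are interchangeable standard arguments.
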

\begin{proof}
Let $G=\pi_1(M)$. Since $M$ is aspherical and finite dimensional, $G$ must be torsion-free. Suppose $M$ is homotopy equivalent to a manifold $N=\faktor{\Hy^n}{\Gamma}$, where $\Gamma\leq \Isom(\Hy^n)$ is a discrete, torsion-free subgroup.  Then a homotopy equivalence $h\colon M\rightarrow N$ lifts to a proper homotopy equivalence $\widetilde{h}\colon \widetilde{M}\rightarrow \Hy^n$, which is a quasi-isometry.  

Conversely, since $G$ is quasi-isometric to $\til{M}$,  if $\til{M}$ is quasi-isometric to $\Hy^n$ then Theorem \ref{thm:HyperRigid} implies that $G$ is isomorphic to a lattice $\Gamma\leq \Isom(\Hy^n)$. Since $G$ is torsion-free, the quotient $N=\faktor{\Hy^n}{\Gamma}$ is a K(G,1).  In particular, $N$ is homotopy equivalent to $M$.  
\end{proof}

\section{Estimates from curvature bounds}\label{sec:Normal Growth Estimates}

Let $X$ be a Hadamard manifold with pinched negative curvature in $[-b^2, -1]$ and $\Sigma \subset X$ a geodesically convex, codimension-1 $C^2$-differentiable submanifold. Let $\nu$ be a $C^1$-differentiable unit normal vector field on $\Sigma$ (there are exactly two choices for $\nu$). Recall that for $t \in \mathbb{R}$ and $q \in \Sigma$, we defined $\Phi_t^\nu(q) := \exp_q(t\nu(q))$ i.e.\,the flow of $q$ in the direction of $\nu$ for time $t$. For all $t \in \mathbb{R}$, we define $\Sigma_t = \Phi_t^\nu(\Sigma)$. Note that $\Sigma_t \cup \Sigma_{-t}$ is the set of points in $X$ that have distance $\vert t \vert$ to $\Sigma$. Furthermore, since $\mathbb{H}$ is invariant under the geodesic flow, we have that the tangent bundle of $\Phi_t(\nu(\Sigma))$ satisfies $T \Phi_t(\nu(\Sigma)) = \mathbb{H}$ because $T \nu(\Sigma) = \mathbb{H}$ due to geodesic completeness of $\Sigma$ (see section \ref{subsec:TotallyGeodesicSubmanifolds}).

Since $\nu$ is $C^1$-differentiable, so is $\Phi_t^\nu : \Sigma \rightarrow \Sigma_t$. We can thus consider the derivative $D \Phi_t^\nu(q)$ at every point $q \in \Sigma$. Since $\Sigma, \Sigma_t$ are both submanifolds of $X$, they inherit a Riemannian metric, making $d \Phi_t^\nu(q)$ a bounded operator between normed vector spaces. We denote
\[ \Vert d \Phi_t^\nu \Vert_{\infty} := \sup_{q \in \Sigma} \{ \Vert d \Phi_t^\nu(q) \Vert_{op} \}, \]
where $\Vert \cdot \Vert_{op}$ denotes the operator norm. If $\Sigma$ admits a cocompact action by isomtries of $X$ (as it does in the situations we want to consider), then continuity of $d \Phi_t^\nu(q)$ in $q$ implies that $\Vert d \Phi_t^\nu \Vert_{\infty}$ is finite.

\begin{definition}
Let $X, \Sigma, \Phi^\nu$ be as above. We say that {\it the normal growth of $\Sigma$ in $X$ is at most $\beta$} if both limits
\[ \lim_{t \rightarrow \pm \infty} \frac{ \Vert d \Phi_t^\nu \Vert_{\infty} }{ e^{\beta \vert t \vert}} < \infty. \]

\end{definition}

\begin{rmk}
One may wish to define the normal growth exponent to be the infimum of all $\beta$ that satisfy the inequalities above. However, it is a priori not clear whether the infimum still satisfies these inequalities. Since we will need the inequalities to hold, we do not define the normal growth exponent as a number.
\end{rmk}

If $X$ has a negative lower curvature bound, this will provide us with an upper bound on the normal growth of $\Sigma$. This is the content of the next Lemma.

\begin{lemma} \label{lem:CurvatureBoundyieldsNormalGrowthBound}
Suppose there exists a group $H < \Isom(X)$ which leaves $\Sigma$ invariant and acts cocompactly on $\Sigma$. Then there exists a constant $C$ such that for all $p \in \Sigma$, $w \in \mathbb{H}_{\nu(p)}$, $t \in \mathbb{R}$, we have $\vert d \Phi_t(\nu(p))(w) \vert \leq e^{b \vert t \vert} C \vert w \vert$. Furthermore, $\Vert d \Phi_t^\nu \Vert_{\infty} \leq C' e^{b \vert t \vert}$ for all $t \in \mathbb{R}$ and some, possibly different, constant $C'$.
\end{lemma}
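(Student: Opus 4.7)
The strategy is to translate the action of $d\Phi_t$ on horizontal vectors into the language of Jacobi fields along the normal geodesic, and then bound these Jacobi fields via Rauch comparison using the lower curvature bound $\kappa \geq -b^2$.

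Fix $p \in \Sigma$ and let $\gamma(s) = \exp_p(s\nu(p))$ be the normal geodesic. For any $w \in T_{\nu(p)}SX$, represent $w$ by a smooth curve $v(s)$ in $SX$ with $v(0) = \nu(p)$, $v'(0) = w$; the geodesic variation $\gamma_s(r) = \exp_{\pi(v(s))}(rv(s))$ produces a Jacobi field $J(r) = \partial_s|_{s=0}\gamma_s(r)$ along $\gamma$ with $J(0) = d\pi(w)$ and $J'(0) = \mathcal{K}(w)$. Reading off the horizontal and vertical components of $d\Phi_t(w)$ via the splitting $TSX = \mathbb{R}\mathbf{X}\oplus\mathbb{H}\oplus\mathbb{V}$, I would identify $d\pi(d\Phi_t(w)) = J(t)$ and $\mathcal{K}(d\Phi_t(w)) = J'(t)$, so that the Sasaki norm satisfies
\[ |d\Phi_t(w)|^2 = |J(t)|^2 + |J'(t)|^2. \]
When $w \in \mathbb{H}_{\nu(p)}$, horizontality forces $\mathcal{K}(w) = 0$ and $d\pi(w) \perp \nu(p)$, so $J$ is a perpendicular Jacobi field with $|J(0)| = |w|$ and $J'(0) = 0$.

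Next I would invoke Rauch comparison. In a space with $\kappa \geq -b^2$, a perpendicular Jacobi field with $J'(0) = 0$ is dominated by the corresponding field $\cosh(br)J(0)$ in the $-b^2$-model, giving $|J(t)| \leq \cosh(b|t|)|w|$. The companion bound $|J'(t)| \leq b\sinh(b|t|)|w|$ follows from $|J''| \leq b^2|J|$ (a consequence of the curvature bound applied to the Jacobi equation) and integration. Combining these with $\cosh(bs), \sinh(bs) \leq e^{bs}$ yields
\[ |d\Phi_t(\nu(p))(w)| \leq \sqrt{1+b^2}\,e^{b|t|}|w|, \]
which proves the first inequality with $C := \sqrt{1+b^2}$.

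For the second inequality, given $w \in T_q\Sigma$, I would use a variation $q(s) \subset \Sigma$ with $q'(0) = w$. The identity $\Phi_t^\nu(q(s)) = \exp_{q(s)}(t\nu(q(s)))$ shows that $d\Phi_t^\nu(q)(w) = J(t)$ for the Jacobi field along $\exp_q(t\nu(q))$ with $J(0) = w$ and $J'(0) = \nabla_w\nu$. Since $\Sigma$ is totally geodesic, its shape operator vanishes and hence $\nabla_w\nu = 0$, so Rauch again gives $|d\Phi_t^\nu(q)(w)| = |J(t)| \leq \cosh(b|t|)|w| \leq e^{b|t|}|w|$. Taking the supremum over unit $w \in T_q\Sigma$ and $q \in \Sigma$ produces $\Vert d\Phi_t^\nu\Vert_\infty \leq C'e^{b|t|}$; the cocompact $H$-action makes the continuous function $q \mapsto \Vert d\Phi_t^\nu(q) \Vert_{\mathrm{op}}$ descend to the compact quotient $\Sigma/H$, guaranteeing finiteness of the sup (though the Rauch bound is already uniform in $p, q$). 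The only real content is to correctly pair the splitting of $TSX$ with Jacobi field initial data; once done, the argument is a standard comparison estimate and presents no serious obstacle.
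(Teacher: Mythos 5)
Your proof is correct and takes a genuinely different route from the paper. The paper uses the Anosov splitting $T\,SX = \mathbb{R}\mathbf{X}\oplus E_s\oplus E_u$ together with the exponential bounds on $d\Phi_t$ along $E_s$ and $E_u$ from Knieper's Lemma~2.16, and then decomposes a horizontal vector as $w=w_s+w_u$. That decomposition introduces point-dependent constants bounding $\vert w_s\vert$ and $\vert w_u\vert$ in terms of $\vert w\vert$, and cocompactness of the $H$-action is invoked precisely to make those constants uniform over $\Sigma$. You instead translate $d\Phi_t$ into the Jacobi field $J$ along the normal geodesic with $J(0)=d\pi(w)$, $J'(0)=\mathcal{K}(w)=0$ (horizontality), and bound $\vert J(t)\vert\leq\cosh(b\vert t\vert)\vert w\vert$ by Rauch/Berger comparison against the $\kappa\equiv -b^2$ model (the right direction, since $\kappa\geq -b^2$ makes Jacobi fields grow slower than in the model, and Hadamardness rules out focal points). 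The companion bound $\vert J'(t)\vert\leq b\sinh(b\vert t\vert)\vert w\vert$ does follow cleanly: because $J'(0)=0$, one has $J'(t)=\int_0^t J''(s)\,ds$, and $J''=-R(\cdot,\gamma')\gamma'\,J$ has operator norm at most $b^2$ on $\gamma'^{\perp}$ by the curvature pinching, so $\vert J'(t)\vert\leq\int_0^t b^2\cosh(bs)\vert w\vert\,ds$. The second part, taking $w\in T_q\Sigma$ and using $J(0)=w$, $J'(0)=\nabla_w\nu=0$ from total geodesy, is likewise correct. The net effect is a cleaner argument: you obtain an explicit constant $\sqrt{1+b^2}$ and, as you point out, your bounds are already uniform in $p$ and do not actually require the cocompactness hypothesis at all (the paper uses it only to tame the $E_s\oplus E_u$ decomposition, a step your approach bypasses entirely).
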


\begin{proof}
By Knieper (Lemma 2.16 in \cite{Knieper02}), this splitting satisfies
\[ \forall (p,v) \in SX, \forall w \in E_s, \forall t \in [0, \infty) : \vert d \Phi_t(p,v)(w) \vert \leq e^{-t} \vert w \vert, \]
\[ \forall (p,v) \in SX, \forall w \in E_u, \forall t \in [0, \infty) : \vert d \Phi_t(p,v)(w) \vert \leq e^{bt} \vert w \vert, \]
\[ \forall (p,v) \in SX, \forall w \in E_s, \forall t \in (-\infty, 0] : \vert d \Phi_t(p,v)(w) \vert \leq e^{bt} \vert w \vert, \]
\[ \forall (p,v) \in SX, \forall w \in E_u, \forall t \in (-\infty, 0] : \vert d \Phi_t(p,v)(w) \vert \leq e^{-t} \vert w \vert. \]

If $p \in \Sigma$, $(p,v) = \nu(p)$, $\gamma$ a path in $\Sigma$ through $p$ and $w \in T_{\nu(p)}SX$ the vector corresponding to the path $\nu(\gamma(s))$, we have a unique decomposition $w = w_s + w_u$ with $w_s \in E_s$ and $w_u \in E_u$. For $t \geq 0$, we estimate
\begin{equation*}
    \begin{split}
        \vert d \Phi_t(\nu(p))(w) \vert & \leq \vert d \Phi_t(\nu(p))( w_s ) \vert + \vert d \Phi_t(\nu(p))( w_u) \vert\\
        & \leq e^{-t} \vert w_s \vert + e^{bt} \vert w_u \vert.
    \end{split}
\end{equation*}

Since $w$ is a horizontal vector and $T_{(p,v)}SX$ is finite-dimensional, we obtain that there exist constants $b_s, b_u > 0$ such that for all unit vectors $w \in \mathbb{H}_{(p,v)}$, we have $\vert w_s \vert \leq e_s$ and $\vert w_u \vert \leq e_u$. In particular, for all $t \geq 0$
\begin{equation*}
    \begin{split}
        \vert d \Phi_t(\nu(p))(w) \vert & \leq e^{-t} e_s \vert w \vert + e^{bt} e_u \vert w \vert\\
        & \leq e^{bt} \cdot C_b \vert w \vert,
    \end{split}
\end{equation*}
where $C_b = e_u + e_s$. Since the splittings $E_s \oplus E_u$ and $\mathbb{H} \oplus \mathbb{V}$ are continuous, the constants $e_s, e_u$ are continuous in $p$. In particular, since $H$ acts cocompactly on $\Sigma$, there exists a uniform constant $C_b$, such that for all $p \in \Sigma$, $w \in \mathbb{H}_{(\nu(p))}$, and $t \geq 0$, we have $\vert d \Phi_t(\nu(p))(w) \vert \leq e^{bt} C_b \vert w \vert$.

For $t \leq 0$, one can do the same estimate and argumentation with the other two inequalities we cited from Knieper to obtain that $C_b$ can be chosen such that for all $p \in \Sigma$, $w \in \mathbb{H}_{\nu(p)}$, and $t \in \mathbb{R}$, we have $\vert d \Phi_t(\nu(p))(w) \vert \leq e^{b \vert t \vert} C_b \vert w \vert$.

Since $\Sigma$ is a geodesically convex, codimension-1 submanifold, $\mathbb{H}_{\nu(p)}$ is isometric to $T_p \Sigma$ via the derivative of the projection map $\pi: SX \rightarrow X$. Now consider the restriction of $\pi$ to $\Phi_t(\nu(\Sigma)) \rightarrow \Sigma_t$. We claim that the derivative of this restriction of $\pi$ is an isomorphism at each point. Indeed, consider the map $\nu_t : \Sigma_t \rightarrow \Phi_t(\nu(\Sigma))$ that sends $\Phi^{\nu}_t(q) \in \Sigma_t$ to $\Phi_t(\nu(q))$. This map is an inverse of the restriction of $\pi$. We conclude that the derivative of both of them is a fibrewise isomorphism between the tangentbundles of $\Sigma_t$ and $\Phi_t(\nu(\Sigma))$. We can write $d\Phi_t^\nu$ as the composition
\[ d\Phi_t^\nu = d \pi( \Phi_t(\nu(p)) ) \circ d \Phi_t(\nu(p)) \circ (d \pi(\nu(p)))^{-1}. \]
Since $H < \Isom(X)$ preserves $\Sigma$, there exists a finite index subgroup $H_0$ that preserves $\Sigma_t$ for all $t \in \mathbb{R}$. This subgroup acts cocompactly on $\Sigma$ and on $\Sigma_t$ for all $t \in \mathbb{R}$. We conclude that $d \pi( \Phi_t(\nu(p))$ has uniformly bounded operator norm. The bound on $\vert d \Phi_t(\nu(p))(w) \vert$ thus implies that $\Vert d\Phi_t^\nu \Vert_{\infty} \leq C' e^{bt}$ for all $t \in \mathbb{R}$.
\end{proof}

Analogous to the upper bound proven above, there is a lower bound on $\vert d \Phi_t(\nu(p))(w) \vert$ for $w \in \mathbb{H}$ as well.

\begin{lemma} \label{lem:CurvatureBoundyieldsLowerBound}
Suppose there exists a group $H < \Isom(X)$ which leaves $\Sigma$ invariant and acts cocompactly on $\Sigma$. Then there exists a constant $c > 0$ such that for all $p \in \Sigma$, $w \in \mathbb{H}_{\nu(p)}$, $t \in \mathbb{R}$, we have $\vert d \Phi_t(\nu(p))(w) \vert \geq e^{\vert t \vert} c \vert w \vert$. Furthermore, there exists a constant $c' > 0$ such that for all $t \in \mathbb{R}$, $p \in \Sigma$, $v \in T_p \Sigma$, we have $\vert d \Phi^{\nu}_t (v) \vert \geq c' e^{\vert t \vert} \vert v \vert$.
\end{lemma}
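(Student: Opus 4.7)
The plan is to bypass the Anosov/stable--unstable splitting used for the upper bound in Lemma \ref{lem:CurvatureBoundyieldsNormalGrowthBound} and instead obtain the lower bound from a direct Jacobi field comparison. The key point is that because $\Sigma$ is totally geodesic, the unit normal field $\nu$ is parallel along $\Sigma$, so the Jacobi field generated by pushing the basepoint along $\Sigma$ has vanishing initial covariant derivative. Combined with the upper curvature bound $\kappa\leq -1$, Rauch's comparison theorem then forces $\cosh$-type growth and yields both claims at once with explicit constant $c = c' = 1/2$.

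I would begin with the second inequality. Fix $p\in\Sigma$ and $v\in T_p\Sigma$, choose a smooth curve $\gamma_v:(-\epsilon,\epsilon)\to\Sigma$ with $\gamma_v(0)=p$ and $\gamma_v'(0)=v$, and consider the smooth variation by geodesics $F(t,s):=\Phi^\nu_t(\gamma_v(s))$. The variation field $J(t):=\partial_s F(t,0)$ is a Jacobi field along the central geodesic $\gamma(t):=\Phi^\nu_t(p)$, and by construction $d\Phi^\nu_t(v)=J(t)$. Its initial data are $J(0)=v$ and, using symmetry of the mixed covariant derivatives,
\[ J'(0) = \nabla_s\partial_t F\vert_{(0,0)} = \nabla_v\nu = 0, \]
the last equality following from the vanishing of the shape operator of $\Sigma$. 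Since $v\perp\nu(p)=\dot\gamma(0)$ and $J'(0)=0$, the field $J$ remains orthogonal to $\dot\gamma$ throughout. Rauch's comparison with the constant curvature $-1$ model then gives
\[ \vert J(t)\vert \geq \vert v\vert\cosh(t) \geq \tfrac{1}{2}\,e^{\vert t\vert}\vert v\vert \]
for all $t\in\mathbb{R}$, proving the second inequality with $c' = 1/2$.

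For the first inequality I would use the identification $T\nu(\Sigma)=\mathbb{H}$ recorded at the beginning of this section. Any $w\in\mathbb{H}_{\nu(p)}$ has the form $w=d\nu(v)$ for a unique $v\in T_p\Sigma$, and since $\mathcal{K}(d\nu(v))=\nabla_v\nu=0$ the Sasaki norm of $w$ reduces to $\vert v\vert$. The standard identifications $d\pi(d\Phi_t(w))=J(t)$ and $\mathcal{K}(d\Phi_t(w))=J'(t)$ along the normal geodesic give
\[ \vert d\Phi_t(\nu(p))(w)\vert^2 = \vert J(t)\vert^2+\vert J'(t)\vert^2 \geq \vert J(t)\vert^2 \geq \vert v\vert^2\cosh^2(t), \]
so $\vert d\Phi_t(\nu(p))(w)\vert \geq \tfrac{1}{2}\,e^{\vert t\vert}\vert w\vert$ as claimed.

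I expect essentially no obstacle: Rauch gives a universal pointwise bound, so the cocompactness of $H$ is not actually needed for this lemma and only appears to match the hypotheses of Lemma \ref{lem:CurvatureBoundyieldsNormalGrowthBound}. The only subtle step is the identification $J'(0)=\nabla_v\nu=0$, which is precisely where totally geodesicity enters and which makes the bound sharper (a $\cosh$ bound, rather than the weaker $\sinh$ bound one would obtain for generic initial conditions).
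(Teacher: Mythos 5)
Your proof is correct and takes a genuinely different route from the paper. The paper derives both lower bounds from Knieper's Anosov-type estimates (Lemma 2.16 in \cite{Knieper02}): it decomposes a horizontal vector $w = w_s + w_u$ along the stable/unstable splitting, applies the exponential lower bound on the unstable piece, absorbs the stable piece once $t$ is large, handles small $t$ by a compactness argument, and uses the cocompact $H$-action throughout to make all the comparison constants between $\mathbb{H}\oplus\mathbb{V}$ and $E_s\oplus E_u$ uniform. You bypass the splitting entirely and argue with a single Jacobi field: because $\Sigma$ is totally geodesic, the variation $F(t,s)=\Phi^\nu_t(\gamma_v(s))$ produces a normal Jacobi field with $J(0)=v$ and $J'(0)=\nabla_v\nu=0$, and the upper curvature bound $\kappa\le-1$ forces, via the Rauch--Berger comparison (here applicable for all $t$ since Hadamard manifolds have no focal points), the growth $|J(t)|\ge\cosh(t)\,|v|\ge\tfrac12 e^{|t|}|v|$. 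The first inequality then follows from the standard dictionary $d\Phi_t(w)\leftrightarrow(J(t),J'(t))$ together with $|w|_{\mathrm{Sasaki}}=|v|$ when $w=d\nu(v)$. The payoff of your argument is real: it gives the explicit constant $c=c'=1/2$, it uses only the curvature hypothesis that is already in the paper's standing assumptions, and it shows the cocompactness of $H$ is not actually needed for this lemma (it survives in the statement only because the paper's Anosov route needs it for uniformity). The one place to be careful in writing it up is to cite the correct comparison theorem: what you need is the version for Jacobi fields with vanishing \emph{initial derivative} (sometimes called Rauch~II or Berger's comparison, e.g.\ in Cheeger--Ebin or Eschenburg), not the $J(0)=0$ version of Rauch, and to note explicitly that the absence of focal points in a Hadamard manifold makes the comparison valid for all $t\in\mathbb{R}$.
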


\begin{proof}
Most of the proof is analogous to the proof of Lemma \ref{lem:CurvatureBoundyieldsNormalGrowthBound}, but there are some additional arguments required. First, we show that for any given $T_0 > 0$, there exists a constant $c_1$ such that for all $\vert t \vert \leq T_0$, $p \in \Sigma$, $w \in \mathbb{H}_{\nu(p)}$, we have
\[ \vert d\Phi_t(\nu(p))(w) \vert \geq c_1 \vert w \vert. \]
Indeed, since $\Phi_t^{-1} = \Phi_{-t}$, we find that
\[ \vert w \vert \leq \Vert d\Phi_{-t}( \Phi_t(\nu(p)) ) \Vert_{op} \vert d \Phi_t(\nu(p))(w) \vert. \]
Since $\Vert d \Phi_{-t}( q, v ) \Vert_{op}$ varies continuously in $t$ and $(q,v) \in SX$ and the set $\{ \Phi_t( \nu(p) ) \vert p \in \Sigma, \vert t \vert \leq T_0 \}$ admits a cocompact action by isometries in $H$, there exists a uniform upper bound $C \geq \Vert d \Phi_{-t} ( \Phi_t(\nu(p)) ) \Vert_{op}$. We put $c_1 = \frac{1}{C}$. Note that this implies the Lemma whenever we bound $t$ on a compact interval.

Next we note that, since at every point $(q,v) \in SX$, the horizontal bundle $\mathbb{H}_{(q,v)}$ has trivial intersection with both $E_s$ and $E_u$, that for every $p \in \Sigma$ there exists a constant $C_p$ such that for every $w \in \mathbb{H}$, written in its unique decomposition $w = w_s + w_u$ with $w_s \in E_s$, $w_u \in E_u$, we have $\vert w \vert \leq C_p \min( \vert w_s \vert, \vert w_u \vert )$. Since all the involved bundles vary continuously in $(q,v)$ and $H$ acts cocompactly on $\nu(\Sigma)$, there exists a constant $C_2$ such that for all $p \in \Sigma$ and all $w \in \mathbb{H}_{\nu(p)}$ with decomposition $w = w_s + w_u$, we have $\vert w \vert \leq C_2 \min( \vert w_s \vert, \vert w_u \vert)$.

We are now ready to prove the Lemma for large $t$. We do so by adapting the argument from the proof of Lemma \ref{lem:CurvatureBoundyieldsNormalGrowthBound}. We use two other inequalities from Lemma 2.16 in \cite{Knieper02}, which say
\[ \forall (p,v) \in SX, \forall w \in E_u, \forall t \in [0, \infty) : \vert d \Phi_t(p,v)(w) \vert \geq e^{t} \vert w \vert,  \]
\[ \forall (p,v) \in SX, \forall w \in E_s, \forall t \in (-\infty, 0] : \vert d \Phi_t(p,v)(w) \vert \geq e^{ -t } \vert w \vert. \]

Let $t \geq 0$ and consider $w \in \mathbb{H}_{\nu(p)}$, with its unique decomposition $w = w_s + w_u$. We estimate, using one of the inequalities above and one from the previous Lemma,
\begin{equation*}
    \begin{split}
        \vert d \Phi_t(\nu(p))(w) \vert & \geq \vert d \Phi_t(\nu(p))(w_u) \vert - \vert d \Phi_t(\nu(p))(w_s) \vert\\
        & \geq e^{t} \vert w_u \vert - e^{-t} \vert w_s \vert.
    \end{split}
\end{equation*}
We find $T_0$, such that for $t \geq T_0$, this inequality continues as
\begin{equation*}
    \begin{split}
        \vert d \Phi_t(\nu(p))(w) \vert & \geq e^{t} \vert w_u \vert - e^{-t} \vert w_s \vert\\
        & \geq \frac{1}{2} e^{t} \vert w_u \vert\\
        & \geq \frac{1}{2 C_2} e^{t} \vert w \vert. 
    \end{split}
\end{equation*}

For $t \leq 0$, let $w \in \mathbb{H}_{\nu(p)}$ with unique decomposition $w = w_s + w_u$ and estimate
\begin{equation*}
    \begin{split}
        \vert d \Phi_t(\nu(p))(w) \vert & \geq \vert d \Phi_t(\nu(p))(w_s) \vert - \vert d \Phi_t(\nu(p))(w_u) \vert\\
        & \geq e^{\vert t \vert} \vert w_s \vert - e^{- \vert t \vert} \vert w_u \vert\\
        & \geq \frac{1}{2} e^{ \vert t \vert} \vert w_s \vert\\
        & \geq \frac{1}{2 C_2} e^{ \vert t \vert} \vert w \vert,
    \end{split}
\end{equation*}
where we increased $T_0$ if necessary, such that the inequality holds for all $t \leq - T_0$.

Combining all three cases, we obtain that there exists a constant $c$ such that for all $t \in \mathbb{R}$, all $p \in \Sigma$ and all $w \in \mathbb{H}_{\nu(p)}$, we have
\[ \vert d \Phi(\nu(p))(w) \vert \geq c e^{ \vert t \vert} \vert w \vert. \]

As in the previous proof, we can write $\Phi^{\nu}_t = \pi \circ \Phi_t \circ \nu$. Thinking of $\nu : \Sigma \rightarrow T X$ as a differentiable map, we see that $d \nu$ is a fibrewise isomorphism of vector spaces. Furthermore, the restriction of $d \pi$ to $\mathbb{H}$, which is the image of $d (\Phi_t \circ \nu)$, is an isomorphism as well as we have shown in the previous Lemma. Therefore, we find a constant $c' > 0$ such that for all $t \in \mathbb{R}$, $p \in \Sigma$, and $v \in T_p \Sigma$,
\[ \vert d \Phi^{\nu}_t(v) \vert \geq c' e^{\vert t \vert} \vert v \vert. \]
\end{proof}

\section{Bounding distances with the normal growth exponent} \label{sec:BoundingDistanceswithNormalGrowthExponent}

As we have seen, $X$ is diffeomorphic to $\Sigma \times \mathbb{R}$ via the map that sends $(q,t) \mapsto \Phi_t^{\nu}(q)$, where $\Phi^{\nu}$ denotes the geodesic flow induced by a choice of a unit normal vector field $\nu$ on $\Sigma \subset X$. Using these coordinates, let $p = (q,t),~p' = (q',t') \in X$. We have the following estimate.

\begin{lemma} \label{lem:NormalGrowthInequality}
Suppose the normal growth exponent of $\Sigma$ in $X$ is at most $\beta$. Furthermore, suppose $\Sigma$ is bi-Lipschitz to $\mathbb{H}^{n}$,  let $f : \Sigma \rightarrow \mathbb{H}^{n+1}$ be the composition of this bi-Lipschitz map with an isometric embedding of $\mathbb{H}^n$ into $\mathbb{H}^{n+1}$, and let $\Phi_t^{\perp}$ denote the geodesic flow induced by a choice of unit normal vector field on $\mathbb{H}^n \subset \mathbb{H}^{n+1}$. Then there exists a constant $C$ such that for all $p = (q,t)$, $p' = (q',t') \in X$,
\[ d(p, p') \leq C d_{\mathbb{H}^{n+1}}( p_{\beta}, p'_{\beta} ),  \]
where $p_{\beta} = \Phi_{\beta t}^{\perp}(f(q))$ and $p'_{\beta} = \Phi_{\beta t'}^{\perp}(f(q'))$.
\end{lemma}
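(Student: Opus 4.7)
The plan is to take a unit-speed geodesic $\gamma(u) = (y(u),s(u))$ in $\mathbb{H}^{n+1}$ from $p_\beta$ to $p'_\beta$, written in the Fermi coordinates from Proposition~\ref{prop:WarpedDecomposition}(1) in which the metric reads $\cosh^2(s)\, g_{\mathbb{H}^n} + ds^2$, and lift it to a Lipschitz path
\[
\tilde\gamma(u) := (f^{-1}(y(u)),\, s(u)/\beta)
\]
in $X$ via the Fermi coordinates $X\cong\Sigma\times\R$ of Section~\ref{subsec:TotallyGeodesicSubmanifolds}. The goal is to show that the length of $\tilde\gamma$ in $X$ is bounded by a constant times the length of $\gamma$ in $\mathbb{H}^{n+1}$, which equals $D := d_{\mathbb{H}^{n+1}}(p_\beta, p'_\beta)$; this yields $d_X(p, p') \leq C D$ directly.

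\textbf{Key estimate.} I would partition $[0, D]$ into intervals $[u_i, u_{i+1}]$ of length at most $\epsilon$ and bound $d_X(\tilde\gamma(u_i), \tilde\gamma(u_{i+1}))$ using the piecewise path that first flows normally from $\tilde\gamma(u_i)$ to $(f^{-1}(y(u_i)), s(u_{i+1})/\beta)$, then moves laterally along $\Sigma_{s(u_{i+1})/\beta}$ to $\tilde\gamma(u_{i+1})$:
\[
d_X(\tilde\gamma(u_i), \tilde\gamma(u_{i+1})) \leq \frac{|s(u_i) - s(u_{i+1})|}{\beta} + \|d\Phi^\nu_{s(u_{i+1})/\beta}\|_\infty \cdot d_\Sigma(f^{-1}(y(u_i)), f^{-1}(y(u_{i+1}))).
\]
The unit-speed identity $\cosh^2(s(u))|y'(u)|^2 + s'(u)^2 = 1$ forces $|s'|\leq 1$ and $|y'| \leq \sech(s)$, so the first term is at most $\epsilon/\beta$, while $d_{\mathbb{H}^n}(y(u_i), y(u_{i+1})) \leq \int_{u_i}^{u_{i+1}} \sech(s(u))\,du$. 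The normal-growth hypothesis supplies a uniform bound $\|d\Phi^\nu_t\|_\infty \leq K e^{\beta|t|}$, and bi-Lipschitz-equivalence of $f$ converts $d_\Sigma$ into $d_{\mathbb{H}^n}$ up to a multiplicative constant. The crucial cancellation $e^{|s|}\sech(s) \leq 2$ then renders the lateral leg at most a constant multiple of $\epsilon$. Summing over the $N = \lceil D/\epsilon\rceil$ segments and letting $\epsilon \to 0$ gives $d_X(p, p') \leq C D$.

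\textbf{Main obstacle.} The principal difficulty is that $f$ is only a bi-Lipschitz homeomorphism, not assumed differentiable, so $\tilde\gamma$ is not a differentiable curve and one cannot compute its length by a pointwise change-of-variables formula. The discretization above sidesteps this, but requires careful bookkeeping: one must argue that the constant $K$ in $\|d\Phi^\nu_t\|_\infty \leq K e^{\beta|t|}$ is genuinely uniform in $t \in \R$ (rather than only asymptotic, as the bare statement of the normal growth exponent suggests), and handle the case where $s(u)$ changes sign between consecutive sample points by refining the partition near zeros of $s$. In all regimes the cancellation $e^{|s|}\sech(s) = O(1)$ is what keeps the constants under control, independently of how far from $\Sigma$ the endpoints lie---this is the structural reason the normal growth exponent $\beta$ must be matched to the rescaling $t \mapsto \beta t$ used in defining $p_\beta$.
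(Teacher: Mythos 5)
Your proposal is correct and rests on the same central idea as the paper's proof: use the warped product decomposition $\mathbb{H}^{n+1}\cong\mathbb{H}^n\times_{\cosh^2}\mathbb{R}$ on the target, the normal-flow coordinates $(q,t)\mapsto\Phi^\nu_t(q)$ on $X$, and the cancellation $e^{\beta\cdot(s/\beta)}=e^{s}\leq 2\cosh(s)$ so that the normal-growth factor is absorbed by the warped-product weight. The execution differs, though. The paper takes a $C^1$ curve $\gamma_\Sigma$ in $\Sigma$ approximating $f^{-1}\circ\tilde\gamma_{\mathbb{H}^n}$, forms the $C^1$ curve $\gamma=(\gamma_\Sigma,\tfrac{1}{\beta}\tilde\gamma_\perp)$ in $X$, integrates its speed directly, and then handles the signs of $t,t'$ by a three-way case split, treating the mixed-sign case by cutting the comparison geodesic where it hits $f(\Sigma)$. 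You instead pull back the $\mathbb{H}^{n+1}$-geodesic coordinate-wise via $f^{-1}$ to a merely Lipschitz curve in $X$, sidestep the non-differentiability by discretizing and bounding each increment by a normal-then-lateral two-leg path, and avoid any case split by using $e^{|s|}\sech(s)\leq 2$ throughout. Both routes are sound: yours is cleaner on the sign bookkeeping, the paper's avoids the partition-and-telescope step in favor of a single length integral.

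The caveat you flag yourself --- that the argument needs $\Vert d\Phi^\nu_t\Vert_\infty\leq Ke^{\beta|t|}$ uniformly for all $t\in\mathbb{R}$, whereas the stated definition only controls the ratio as $t\to\pm\infty$ --- is real but is equally present in the paper's proof, which pulls out a single constant $C$ without comment. In the settings where the lemma is actually invoked (Corollary~\ref{cor:BiLipschitz}, where $\Sigma$ carries a cocompact isometry group) the bound on a compact $t$-interval is automatic by continuity and cocompactness, so this is not a gap in your argument relative to the paper's. Your remark about refining the partition near zeros of $s$ is actually unnecessary: the estimate $e^{|s(u_{i+1})|}\leq e^{\epsilon}e^{|s(u)|}$ for $u\in[u_i,u_{i+1}]$ follows from $\big||s(u_{i+1})|-|s(u)|\big|\leq|s(u_{i+1})-s(u)|\leq\epsilon$ regardless of sign changes.
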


\begin{proof}
We start by proving this inequality for the case where $t, t' \geq 0$. (The other cases will follow from this.) Let $q, q' \in \Sigma$, $t, t' \geq 0$. Let $\tilde{\gamma}$ be the unique geodesic from $p_{\beta}$ to $p'_{\beta}$. Since the chosen isometric embedding of $\mathbb{H}^n \hookrightarrow \mathbb{H}^{n+1}$ provides $\mathbb{H}^{n+1}$ with a diffeomorphism to $\mathbb{H}^n \times \mathbb{R}$ equipped with a warped product metric, we can write $p_{\beta} = (f(q), \beta t), p'_{\beta} = (f(q'), \beta t')$ analogously as we did with $p, p'$ in $X$. In this notation, we can write $\tilde{\gamma} = (\tilde{\gamma}_{\mathbb{H}^n}, \tilde{\gamma}_{\perp})$, where $\tilde{\gamma}_{\mathbb{H}^n}$ is the unique geodesic from $f(q)$ to $f(q')$.

Let $\epsilon > 0$. Let $\gamma_{\Sigma}$ be a $C^1$-path in $\Sigma$ from $q$ to $q'$ such that $f \circ \gamma_{\Sigma}$ satisfies $l( f \circ \gamma_{\Sigma} ) \leq d_{\mathbb{H}^n} ( f(q), f(q') ) + \epsilon$. Such a path can be obtained by approximating the preimage of the geodesic from $f(q)$ to $f(q')$ under $f^{-1}$ suitably well. We obtain a $C^1$-differentiable path $\gamma = (\gamma_{\Sigma}, \frac{1}{\beta} \tilde{\gamma}_{\perp} )$ from $(q, t)$ to $(q', t')$. We consider the orthogonal projections of $\gamma'(s)$ to $\nu(\gamma(s))$ and to $\nu(\gamma(s))^{\perp}$, which we denote by $\gamma'_{radial}(s)$ and $\gamma'_{slice}(s)$ respectively. We have the following identities:
\[ \Vert \gamma'_{radial}(s) \Vert = \frac{1}{\beta} \Vert \tilde{\gamma}'_{\perp}(s) \Vert, \]
\[ \gamma'_{slice}(s) = d\Phi^\nu_{\frac{1}{\beta} \tilde{\gamma}_{\perp}(s)}(\gamma'_{\Sigma}(s)). \]

These identities allow us to estimate the length of $\gamma$. Namely, using the fact that the normal growth exponent of $X$ with respect to $\Sigma$ is at most $\beta$, we obtain
\begin{equation*}
    \begin{split}
    l(\gamma) & = \int_{0}^1 \sqrt{ \langle \gamma'_{slice}(s), \gamma'_{slice}(s) \rangle + \langle \tilde{\gamma}'_{radial}(s), \tilde{\gamma}'_{radial}(s) \rangle } ds\\
    & = \int_{0}^1 \sqrt{ \langle d\Phi^\nu_{\frac{1}{\beta} \tilde{\gamma}_{\perp}(s)}( \gamma'_{\Sigma}(s) ), d\Phi^\nu_{\frac{1}{\beta} \tilde{\gamma}_{\perp}(s)}( \gamma'_{\Sigma}(s) ) \rangle + \frac{1}{\beta^2} \langle \tilde{\gamma}'_{\perp}(s), \tilde{\gamma}_{\perp}(s) \rangle } ds\\
    & \leq C \int_{0}^1 \sqrt{ e^{2 \beta \frac{1}{\beta} \tilde{\gamma}_{\perp}(s)} \langle \gamma'_{\Sigma}(s), \gamma'_{\Sigma}(s) \rangle + \frac{1}{\beta^2} \langle \tilde{\gamma}'_{\perp}(s), \tilde{\gamma}'_{\perp}(s) \rangle } ds\\
    & \leq \sqrt{4}C \int_{0}^1 \sqrt{ \cosh(\tilde{\gamma}_{\perp}(s))^2 \langle \gamma'_{\Sigma}(s), \gamma'_{\Sigma}(s) \rangle + \frac{1}{\beta^2} \langle \tilde{\gamma}'_{\perp}(s), \tilde{\gamma}'_{\perp}(s) \rangle } ds.
    \end{split}
\end{equation*}

Let $L$ be the bi-Lipschitz constant of $f$. Since $f$ is bi-Lipschitz, we have that $f \circ \gamma_{\Sigma}$ is differentiable almost everywhere. Wherever it is, the inequality $\Vert \gamma'_{\Sigma}(s) \Vert \leq L \Vert (f \circ \gamma_{\Sigma})'(s) \Vert$ holds. In particular, we can continue the estimate above as
\begin{equation*}
    \begin{split}
        l(\gamma) & \leq 2C \int_{0}^1 \sqrt{ \cosh(\tilde{\gamma}_{\perp}(s))^2 \langle \gamma'_{\Sigma}(s), \gamma'_{\Sigma}(s) \rangle + \frac{1}{\beta^2} \langle \tilde{\gamma}'_{\perp}(s), \tilde{\gamma}'_{\perp}(s) \rangle } ds\\
        & \leq 2LC \int_{0}^1 \sqrt{ \cosh(\tilde{\gamma}_{\perp}(s))^2 \langle (f \circ \gamma_{\Sigma})'(s), (f \circ \gamma_{\Sigma})'(s) \rangle + \frac{1}{\beta^2} \langle \tilde{\gamma}'_{\perp}(s), \tilde{\gamma}'_{\perp}(s) \rangle } ds\\
        & \leq 2LC \int_{0}^1 \sqrt{ \cosh(\tilde{\gamma}_{\perp}(s))^2 \langle (f \circ \gamma_{\Sigma})'(s), (f \circ \gamma_{\Sigma})'(s) \rangle +  \langle \tilde{\gamma}'_{\perp}(s), \tilde{\gamma}'_{\perp}(s) \rangle } ds.
    \end{split}
\end{equation*}

The integral in the last expression is the length of the path $(f \circ \gamma_{\Sigma}, \tilde{\gamma}_{\perp})$ with respect to the warped product metric on $\mathbb{H}^n \times \mathbb{R}$, which is isometric to $\mathbb{H}^{n+1}$. We obtain
\begin{equation*}
    \begin{split}
        l(\gamma) \leq 2LC l_{\mathbb{H}^{n+1}}( (f \circ \gamma_{\Sigma}, \tilde{\gamma}_{\perp}) ),
    \end{split}
\end{equation*}
which is a path from $p_{\beta}$ to $p'_{\beta}$.

Let $\delta > 0$. Since geodesics in $\mathbb{H}^{n+1}$ project to (unparametrised) geodesics in the isometrically embedded $\mathbb{H}^n$, there exists $\epsilon > 0$, depending on $p$, $p'$, and $\delta$, such that if $l_{\mathbb{H}^n}(f \circ \gamma_{\Sigma}) \leq d_{\mathbb{H}^n}(f(q), f(q')) + \epsilon$, then $l_{\mathbb{H}^{n+1}}( (f \circ \gamma_{\Sigma}, \tilde{\gamma}_{\perp}) ) \leq d_{\mathbb{H}^{n+1}}(p_{\beta}, p'_{\beta}) + \delta$. Starting with some given $\delta$, we can thus choose $\gamma_{\Sigma}$ such that
\[ d(p, p') \leq l(\gamma) \leq 2LC l_{\mathbb{H}^{n+1}}( (f \circ \gamma_{\Sigma}, \tilde{\gamma}_{\perp}) ) \leq 2LC ( d_{\mathbb{H}^{n+1}}( p_{\beta}, p'_{\beta} ) + \delta ). \]
As $\delta$ tends to zero, we obtain, $d(p, p') \leq 2LC d_{\mathbb{H}^{n+1}}(p_{\beta}, p'_{\beta})$. This proves the inequality in the case $t, t' \geq 0$.\\

If $t, t' \leq 0$, the same estimates go through. If $t < 0 < t'$ and $\tilde{\gamma}$ is the geodesic from $(f(q), \beta t)$ to $(f(q'), \beta t')$, there exists a unique point $p_0 = (q_0, 0) \in \Sigma \subset X$ such that $\tilde{\gamma}$ intersects $f(\Sigma)$ in $(f(q_0), 0)$. We estimate
\begin{equation*}
    \begin{split}
        d( (q,t), (q',t') ) & \leq d( (q,t) , (q_0, 0) ) + d( (q_0, 0), (q', t') )\\
        & \leq d( (f(q), \beta t), (f(q_0), 0) ) + d( (f(q_0), 0), (f(q'), \beta t') )\\
        & = d( (f(q), \beta t) , (f(q'), \beta t') ).
    \end{split}
\end{equation*}
This proves the Lemma.
\end{proof}

\begin{lemma} \label{lem:LowerBound}
Let $X, \Sigma, f$, and $\Phi^{\perp}_t$ be as in Lemma \ref{lem:NormalGrowthInequality}, except that we do not assume any bound on the normal growth exponent. Suppose instead that there exists $H < \Isom(X)$ that preserves $\Sigma$ and acts cocompactly on $\Sigma$. Then there exists a constant $c$ such that for all $p = (q,t)$, $p' = (q',t') \in X$,
\[ d(p,p') \geq c d_{\mathbb{H}^{n+1}} ( \overline{p}, \overline{p'} ), \]
where $\overline{p} = \Phi_t^{\perp}(f(q))$ and $\overline{p'} = \Phi_{t'}^{\perp}(f(q'))$.
\end{lemma}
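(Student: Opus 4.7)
The plan is to mirror the argument of Lemma \ref{lem:NormalGrowthInequality} but with the inequalities reversed, using the lower normal-growth estimate from Lemma \ref{lem:CurvatureBoundyieldsLowerBound} in place of an upper bound on the normal growth exponent. Rather than building a competitor path in $X$ from a minimiser in $\mathbb{H}^{n+1}$, I would take the actual $X$-geodesic from $p$ to $p'$, write it in Fermi coordinates, and push its $\Sigma$-component forward by $f$ to obtain a comparison path in $\mathbb{H}^{n+1}$ whose hyperbolic length is bounded above by a constant multiple of $d(p,p')$.

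Concretely, let $\gamma\colon[0,1]\to X$ be the unique geodesic from $p$ to $p'$, and use the $C^1$-diffeomorphism $\Psi\colon\Sigma\times\mathbb{R}\to X$ from \S\ref{subsec:TotallyGeodesicSubmanifolds} to write $\gamma(s)=(\gamma_\Sigma(s),\gamma_\perp(s))$. In these coordinates the metric $\Psi^*g_X$ is the block-diagonal form $dt^2+h_t$, where $h_t$ is the pullback via $\Phi^\nu_t$ of the induced metric on $\Sigma_t$; equivalently $|v|_{h_t}=|d\Phi^\nu_t(v)|_X$ for $v\in T_q\Sigma$. Therefore
\[
d(p,p')=l(\gamma)=\int_0^1\sqrt{|\gamma'_\perp(s)|^2+|d\Phi^\nu_{\gamma_\perp(s)}(\gamma'_\Sigma(s))|^2}\,ds.
\]
Lemma \ref{lem:CurvatureBoundyieldsLowerBound} gives a constant $c'>0$ such that $|d\Phi^\nu_{\gamma_\perp(s)}(\gamma'_\Sigma(s))|\ge c'e^{|\gamma_\perp(s)|}|\gamma'_\Sigma(s)|$, and since $e^{|t|}\ge\cosh(t)$ for every $t\in\mathbb{R}$, the integrand is at least $\sqrt{|\gamma'_\perp(s)|^2+c'^{\,2}\cosh^2(\gamma_\perp(s))\,|\gamma'_\Sigma(s)|^2}$.

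Finally, since $f$ is $L$-bi-Lipschitz the curve $f\circ\gamma_\Sigma$ is Lipschitz, hence differentiable almost everywhere, and at every point of differentiability $|(f\circ\gamma_\Sigma)'(s)|\le L|\gamma'_\Sigma(s)|$. Setting $c_0:=\min(1,c'/L)$ we conclude
\[
d(p,p')\ge c_0\int_0^1\sqrt{|\gamma'_\perp(s)|^2+\cosh^2(\gamma_\perp(s))\,|(f\circ\gamma_\Sigma)'(s)|^2}\,ds.
\]
Under the warped product identification $\mathbb{H}^{n+1}\cong\mathbb{H}^n\times_{\cosh^2}\mathbb{R}$ of Proposition \ref{prop:WarpedDecomposition}, the right-hand integrand is precisely the hyperbolic speed of the path $s\mapsto(f(\gamma_\Sigma(s)),\gamma_\perp(s))$ joining $\overline{p}=(f(q),t)$ to $\overline{p'}=(f(q'),t')$, so the integral bounds $d_{\mathbb{H}^{n+1}}(\overline{p},\overline{p'})$ from below and the lemma follows with $c=c_0$. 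I do not anticipate a serious obstacle: all analytic ingredients are in place, and the only care needed is to invoke Rademacher's theorem so that the bi-Lipschitz derivative bound can be integrated along $\gamma_\Sigma$.
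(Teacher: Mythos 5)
Your proposal is correct and takes essentially the same route as the paper: you take the $X$-geodesic from $p$ to $p'$, decompose it in Fermi coordinates, use the lower bound from Lemma~\ref{lem:CurvatureBoundyieldsLowerBound} together with $e^{|t|}\geq\cosh(t)$ and the bi-Lipschitz bound for $f$ to dominate the hyperbolic length of $(f\circ\gamma_\Sigma,\gamma_\perp)$, and then bound this below by $d_{\mathbb{H}^{n+1}}(\overline{p},\overline{p'})$. Your care with the constant $c_0=\min(1,c'/L)$ (needed to pull $c'$ out of the square root) is in fact slightly more precise than the bookkeeping in the paper.
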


\begin{proof}
The proof follows the same argument as the proof of Lemma \ref{lem:NormalGrowthInequality}, except that we use the lower bound on $\vert d \Phi_t(\nu(p))(w) \vert$ obtained in Lemma \ref{lem:CurvatureBoundyieldsLowerBound} instead of the bound given by the normal growth exponent.

Let $t, t' \in \mathbb{R}$, $q, q' \in \Sigma$. Let $\gamma$ be the unique geodesic from $p$ to $p'$. Using the diffeomorphism $X \approx \Sigma \times \mathbb{R}$, we can write $\gamma = (\gamma_{\Sigma}, \gamma_{\perp})$. As before, we have that $\gamma'(s)$ can be decomposed into its orthogonal projections to $\nu(\gamma(s))$ and $\nu(\gamma(s))^{\perp}$, which we denote by $\gamma'_{radial}(s)$ and $\gamma'_{slice}(s)$ respectively and we have the identities
\[ \Vert \gamma'_{radial}(s) \Vert = \Vert \gamma'_{\perp}(s) \Vert, \]
\[ \gamma'_{slice}(s) = d \Phi_{\gamma_{\perp}}^\nu( \gamma'_{\Sigma}(s) ). \]

Using Lemma \ref{lem:CurvatureBoundyieldsLowerBound} together with the fact that $\gamma'_{\Sigma}(s) \in T \Sigma$ for all $s$, we estimate
\begin{equation*}
    \begin{split}
        l(\gamma) & = \int_{0}^1 \sqrt{ \langle \gamma'_{slice}(s), \gamma'_{slice}(s) \rangle + \langle \gamma'_{radial}(s), \gamma'_{radial}(s) \rangle } ds\\
        & = \int_{0}^1 \sqrt{ \langle d \Phi^\nu_{\gamma_{\perp}(s)}( \gamma'_{\Sigma}(s) ), d \Phi^\nu_{\gamma_{\perp}(s)}( \gamma'_{\Sigma}(s) ) \rangle + \langle \gamma'_{\perp}(s), \gamma'_{\perp}(s) \rangle } ds\\
        & \geq c' \int_{0}^1 \sqrt{ e^{ 2 \vert \gamma_{\perp}(s) \vert } \langle \gamma'_{\Sigma}(s), \gamma'_{\Sigma}(s) \rangle + \langle \gamma'_{\perp}(s), \gamma'_{\perp}(s) \rangle } ds\\
        & \geq c' \int_{0}^1 \sqrt{ \cosh( \gamma_{\perp}(s) )^2 \langle \gamma'_{\Sigma}(s), \gamma'_{\Sigma}(s) \rangle + \langle \gamma'_{\perp}(s), \gamma'_{\perp}(s) \rangle } ds\\
        & \geq \frac{c'}{L} \int_{0}^1 \sqrt{ \cosh(\gamma_{\perp}(s) )^2 \langle (f \circ \gamma_{\Sigma})'(s), (f \circ \gamma_{\Sigma})'(s) \rangle + \langle \gamma'_{\perp}(s), \gamma'_{\perp}(s) \rangle } ds\\
        & = \frac{c'}{L} l_{\mathbb{H}^{n+1}}( f \circ \gamma_{\Sigma}, \gamma_{\perp} ),
    \end{split}
\end{equation*}
where $( f \circ \gamma_{\Sigma}, \gamma_{\perp})$ is a continuous path in $\mathbb{H}^{n+1}$ from $(f(q),t)$ to $(f(q'),t')$ that is differentiable almost everywhere. We conclude that
\[ l(\gamma) \geq \sqrt{ \frac{c'}{L} } l_{\mathbb{H}^{n+1}} ( f \circ \gamma_{\Sigma}, \gamma_{\perp} ) \geq \sqrt{ \frac{c'}{L} } d_{\mathbb{H}^{n+1}}( (f(q),t), (f(q'),t') ), \]
which is the inequality we wanted to prove.
\end{proof}

As a corollary, we obtain Theorem \ref{thm:MainNoPi1}.

\begin{corollary} \label{cor:BiLipschitz}
Let $X$ be a Hadamard manifold with sectional curvatures in $[-b^2, -1]$ and $\Sigma$ a geodesically convex, $C^2$-submanifold such that the normal growth exponent of $\Sigma$ in $X$ is at most $1$. Suppose there exists $H < \Isom(X)$ that preserves $\Sigma$ and acts cocompactly on it and suppose $\Sigma$ is bi-Lipschitz equivalent to $\mathbb{H}^n$. Then $X$ is bi-Lipschitz equivalent to $\mathbb{H}^{n+1}$.
\end{corollary}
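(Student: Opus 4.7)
The strategy is to assemble the two previous lemmas into a single bi-Lipschitz map. The candidate map is the obvious one suggested by the statements of Lemmas \ref{lem:NormalGrowthInequality} and \ref{lem:LowerBound}: define
\[ F : X \longrightarrow \mathbb{H}^{n+1}, \qquad F(q,t) := \Phi^{\perp}_t(f(q)), \]
where $(q,t) \in \Sigma \times \mathbb{R}$ are the Fermi coordinates on $X$ with respect to $\Sigma$ introduced in Section \ref{sec:Preliminaries}, $f : \Sigma \to \mathbb{H}^n$ is the hypothesized bi-Lipschitz equivalence composed with an isometric embedding $\mathbb{H}^n \hookrightarrow \mathbb{H}^{n+1}$, and $\Phi^{\perp}_t$ is the normal geodesic flow in $\mathbb{H}^{n+1}$ along this totally geodesic copy of $\mathbb{H}^n$. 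Both $X$ and $\mathbb{H}^{n+1}$ admit a diffeomorphism with $\Sigma \times \mathbb{R}$ (respectively $\mathbb{H}^n \times \mathbb{R}$) via their normal flows, so $F$ is a well-defined bijection.

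Next, I would verify that $F$ is bi-Lipschitz by directly invoking the two lemmas. Since the normal growth exponent of $\Sigma$ in $X$ is assumed to be at most $1$, Lemma \ref{lem:NormalGrowthInequality} applied with $\beta = 1$ yields a constant $C > 0$ such that for all $p = (q,t),\, p' = (q',t') \in X$,
\[ d_X(p,p') \leq C\, d_{\mathbb{H}^{n+1}}(F(p), F(p')). \]
For the reverse direction, Lemma \ref{lem:LowerBound} (which requires only the cocompact action of $H < \Isom(X)$ on $\Sigma$, and no bound on the normal growth exponent) provides a constant $c > 0$ with
\[ d_X(p,p') \geq c\, d_{\mathbb{H}^{n+1}}(F(p), F(p')). \]
These two inequalities together give
\[ c\, d_{\mathbb{H}^{n+1}}(F(p), F(p')) \leq d_X(p,p') \leq C\, d_{\mathbb{H}^{n+1}}(F(p), F(p')), \]
so $F$ is a bi-Lipschitz homeomorphism from $X$ onto its image. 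Since $F$ is surjective onto $\mathbb{H}^{n+1}$ (it hits every point $(y,s) \in \mathbb{H}^n \times \mathbb{R}$ because $f$ is a bijection and $\Phi^\perp_s$ sweeps out all normal distances), this produces the desired bi-Lipschitz equivalence $X \simeq \mathbb{H}^{n+1}$.

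There is essentially no obstacle left at this stage: the genuine work has already been done in Lemmas \ref{lem:CurvatureBoundyieldsLowerBound}, \ref{lem:NormalGrowthInequality}, and \ref{lem:LowerBound}, where the normal growth exponent hypothesis is converted into a Lipschitz comparison between the warped product metric of $X$ (in Fermi coordinates around $\Sigma$) and the explicit warped product metric $dr^2 + \cosh^2(r)\, dx^2$ of $\mathbb{H}^{n+1}$ around $\mathbb{H}^n$ from Proposition \ref{prop:WarpedDecomposition}. The only mild point to double-check is that the two warped-product coordinate systems on $X$ and $\mathbb{H}^{n+1}$ are matched consistently through $F$, i.e. that $F$ sends the hypersurface $\Sigma_t \subset X$ to the equidistant hypersurface at signed distance $t$ from $f(\Sigma)$ in $\mathbb{H}^{n+1}$; this is immediate from the definition $F(q,t) = \Phi^\perp_t(f(q))$ and the fact that $t$ is the signed distance to $\Sigma$ in both of the Fermi coordinate systems.
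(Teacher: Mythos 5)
Your proof is correct and follows essentially the same approach as the paper: define $F(q,t) = \Phi^\perp_t(f(q))$ (which in the warped-product coordinates $\mathbb{H}^n\times_{\cosh^2}\mathbb{R}$ is exactly the paper's $(f(q),t)$), then apply Lemma~\ref{lem:NormalGrowthInequality} with $\beta=1$ for the upper bound and Lemma~\ref{lem:LowerBound} for the lower bound. The two arguments are the same up to notation.
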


\begin{proof}
Since $\Sigma$ is geodesically convex, $X$ is $C^2$-diffeomorphic to $\Sigma \times \mathbb{R}$. Similarly, we write $\mathbb{H}^{n+1}$ as a warped product $\mathbb{H}^n \times_{\cosh(t)^2} \mathbb{R}$. Let $f : \Sigma \rightarrow \mathbb{H}^{n+1}$ be the composition of the bi-Lipschitz map $\Sigma \rightarrow \mathbb{H}^n$ with the isometric identification of $\mathbb{H}^n$ with $\mathbb{H}^n \times \{ 0 \} \subset \mathbb{H}^n \times_{\cosh(t)^2} \mathbb{R}$. Define $F : X \mapsto \mathbb{H}^{n+1}$ by $F((q,t)) := (f(q), t)$. Applying Lemma \ref{lem:NormalGrowthInequality} for the case $\beta = 1$ and Lemma \ref{lem:LowerBound} we find a constant $\overline{L}$ such that
\[ \frac{1}{\overline{L}} d(F(p), F(p')) \leq d(p,p') \leq \overline{L} d(F(p), F(p') ). \]
In other words, $F$ is a bi-Lipschitz map between $X$ and $\mathbb{H}^{n+1}$ (bijectivity is clear).
\end{proof}

\begin{corollary}\label{cor:Growth Exponent Bound implies lattice} Let $X$ be a Hadamard manifold with sectional curvatures in $[-b^2, -1]$ and $\Sigma$ a geodesically convex, $C^2$-submanifold such that the normal growth exponent of $\Sigma$ in $X$ is at most $1$.  Suppose $G$ acts cocompactly on $X$ and that the stabilizer $H$ of $\Sigma$ acts cocompactly on $\Sigma$.  If $\Sigma$ is bi-Lipschitz to $\Hy^n$ then $G$ is isomorphic to a cocompact lattice in $\Isom(\Hy^{n+1})$.
\end{corollary}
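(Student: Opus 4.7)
The argument is a short application of Corollary \ref{cor:BiLipschitz} together with the Milnor-\v Svarc lemma and the Sullivan--Tukia rigidity theorem (Theorem \ref{thm:HyperRigid}). The plan is to upgrade the bi-Lipschitz equivalence between $X$ and $\Hy^{n+1}$ to a quasi-isometry between $G$ and $\Hy^{n+1}$, and then invoke rigidity.

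First, I would verify that all hypotheses of Corollary \ref{cor:BiLipschitz} hold: the pinched curvature bound on $X$, the upper bound of $1$ on the normal growth exponent of $\Sigma$, the existence of $H \leq \Isom(X)$ acting cocompactly on $\Sigma$, and the bi-Lipschitz equivalence between $\Sigma$ and $\Hy^n$ are exactly the assumptions we are given. Corollary \ref{cor:BiLipschitz} then produces a bi-Lipschitz equivalence $F : X \to \Hy^{n+1}$; in particular $F$ is a $(\overline L, 0)$-quasi-isometry.

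Second, since $G$ acts properly discontinuously and cocompactly on the proper geodesic space $X$ by isometries, the Milnor-\v Svarc lemma implies that $G$ is finitely generated and that any orbit map $G \to X$ is a quasi-isometry from $G$ (equipped with any word metric) to $X$. Composing with $F$ yields a quasi-isometry $G \to \Hy^{n+1}$.

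Finally, I would invoke Theorem \ref{thm:HyperRigid} to conclude that $G$ admits a discrete, cocompact action on $\Hy^{n+1}$ by isometries, so that $G$ is isomorphic to a cocompact lattice in $\Isom(\Hy^{n+1})$. Applying Sullivan--Tukia requires $G$ to be torsion-free; this is automatic in the principal application (where $G = \pi_1(M)$ for a closed aspherical negatively curved manifold $M$) and we may assume it here, or else pass to a finite-index torsion-free subgroup. The substantive work has already been packaged into Corollary \ref{cor:BiLipschitz}, so no genuinely new obstacle arises at this step; the hardest part would only be the (standard) bookkeeping to handle torsion if one wished to state the result without the torsion-free assumption.
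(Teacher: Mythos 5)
Your argument is essentially the paper's own proof: apply Corollary~\ref{cor:BiLipschitz}, feed the resulting bi-Lipschitz map through Milnor--\v{S}varc to get a quasi-isometry $G \to \Hy^{n+1}$, and invoke Theorem~\ref{thm:HyperRigid}. The one thing you left out is the low-dimensional case: Theorem~\ref{thm:HyperRigid} is stated only for dimension $\geq 3$, so when $n+1=2$ you cannot cite it; the paper disposes of this case separately by observing that $G$ then acts cocompactly on a negatively curved surface and is therefore a hyperbolic surface group, hence already a cocompact lattice in $\Isom(\Hy^2)$. Your remark about torsion is reasonable (the paper's version of Sullivan--Tukia is stated for torsion-free groups, and the paper itself does not comment on this in the proof of the corollary); in the intended application $G=\pi_1(M)$ is automatically torsion-free, and in general one would appeal to the version of Tukia's theorem that makes no torsion-free assumption rather than trying to argue through a finite-index subgroup, since cocompact-lattice-ness does not obviously pass up from a finite-index subgroup.
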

\begin{proof}
If $n+1= 2$, then $G$ must be the fundamental group of a surface of genus $g\geq 2$, hence $G$ is obviously a cocompact lattice in $\Isom(\Hy^{2})$. We may thus suppose $n+1\geq 3$. Since $G$ acts cocompactly on $X$, the Cayley graph of $G$ is quasi-isometric to $X$, and hence by Corollary \ref{cor:BiLipschitz}, $G$ is quasi-isometric to $\Hy^{n+1}$.  Thus, $G$ acts on $\dinf \Hy^{n+1}$ by uniform quasi-symmetric homeomorphisms.  By Theorem \ref{thm:HyperRigid}, $G$ is isomorphic to a cocompact lattice in $\Isom(\Hy^{n+1})$.
\end{proof}

\begin{proof}[Proof of Theorem \ref{thm:Main Cocompact}]By Corollary \ref{cor:Growth Exponent Bound implies lattice}, $G=\pi_1(M)$ is isomorphic to a cocompact, torsion-free lattice $\Gamma\leq \Isom(\Hy^{n+1})$. Thus, $\faktor{\Hy^{n+1}}{\Gamma}$ is a $K(G,1)$, hence homotopy equivalent to $M$.  
\end{proof}

We highlight that the proof of Corollary \ref{cor:BiLipschitz} does not use Lemma \ref{lem:CurvatureBoundyieldsNormalGrowthBound}. The lower curvature bound can only show that the normal growth exponent is at most $b$, but the corollary requires that the normal growth exponent is at most $1$. In the next section, we present a family of examples that highlight that a lower bound on the curvature cannot suffice to obtain the conclusion of Corollary \ref{cor:BiLipschitz}.

\section{Gromov--Thurston Manifolds} \label{sec:GromovThurstonManifolds}

In this section, we prove Theorem \ref{thm:MainGromovThurstonExamples}. As stated in the introduction, these examples arise from a construction due to Gromov and Thurston.  We begin by reviewing this construction (see \cite{GromovThurston87,Kapovich07} for more details).


  Suppose $n\geq 4$ and let $I_n=\sum_{i=1}^nx_i^2-\sqrt{2}x_{n+1}^2$  and denote by $\Gamma_n$ the group of automorphisms of $I_n$ with entries in the ring of integers of $\Q(\sqrt{2})$. The inner product $I_n$ has signature $(n,1)$ and therefore we can identify $\Hy^{n}$ with $\{x\in \R^{n+1}\mid I_n(x)=-1\}$. From this we see that $\Gamma_n$ acts on $\Hy^n$, and the action is discrete and cocompact.   Moreover, since $\Gamma_n$ is linear, there exists a torsion-free normal subgroup $\Gamma \trianglelefteq \Gamma_n$ of finite-index. The quotient $M=\faktor{\Hy^n}{\Gamma}$ is a closed manifold, and passing to a double cover if necessary, we may assume $M$ is orientable. We will call such an $M$ an $I_n$-manifold.

Since $\Gamma$ is normal, the reflection $ x_1\mapsto -x_1$ descends to an involution $r_1$ on $M$ with fixed set a closed, totally geodesic, codimension-1 submanifold $W_1$. The involution $x_2\mapsto-x_2$ also induces an involution $r_2$ of $M$ which maps $W_1$ to itself and fixes a closed, totally geodesic codimension-1 submanifold $W_2$. Thus, $V=W_1\cap W_2$ is  a totally geodesic codimension-2 submanifold of $M$ and the dihedral angle of $W_1$ and $W_2$ along $V$ is $\pi/2$.

Denote by $U_1$ and $U_2$ the closure of each component of $W_1\setminus V$. In particular, we have $r_2\colon U_1\rightarrow U_2$ is an isometry.  Since $V$ bounds both $U_1$, and $U_2$, the homology class $[V]\in H_{n-2}(M;\Z)$ is trivial. 

Let $\Mtil_k$ be the $k$-fold branched cover of $M$, branched along $V$, and denote by $\psi_k\colon \Mtil_k\rightarrow M$ the covering map. The involutions $r_i,~i=1,2$ lift to $2k$ involutions of $\Mtil_k$, which therefore generate a dihedral group $D_{2k}$. If $F$ is the manifold with corners obtained by splitting $M$ open along $U_2$, then $F$ is a fundamental domain for the action of $\faktor{\Z}{k\Z}$ on $\Mtil_k$ that induces the covering.  There are $k$ copies of $F$ in $\Mtil_k$ which are cyclically permuted.  Label these $F^0,\ldots, F^{k-1}$ as in Figure \ref{fig:fundamental domain}.  There are also $k$ lifts of $U_1$ and $U_2$ in $\Mtil_k$, and we label these $\what{U}_1^i$ and $\what{U}_2^i$ respectively, where $0\leq i\leq k-1$. Finally, the branch locus $\what{V}\subset \Mtil_k$ is fixed by the action of $\faktor{\Z}{k\Z}$, and maps isometrically down to $V$.  

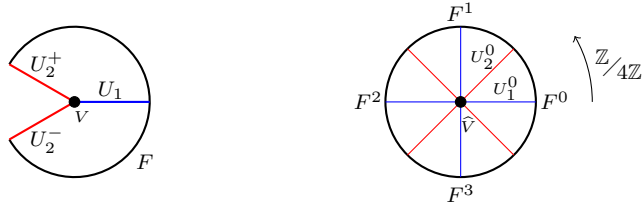
\begin{figure}[h]
 
  \begin{tikzpicture}
\draw[thick] (0,0) arc (-150:150:1);

\draw[thick,red] (0,0)--(.866,.5);
\draw[thick, red] (0,1)--(.866,.5);
\draw[thick, blue](.866,.5)--(1.866,.5);
\node at (1.8,-.3) {$\scriptstyle F$};
\node at (1.35,.65) {$\scriptstyle U_1$};
\node at (.5,1) {$\scriptstyle U_2^+$};
\node at (.5,0) {$\scriptstyle U_2^-$};
\node at (.966,.3) {$\scriptscriptstyle V$};
\filldraw (.866,.5) circle (2pt);

\draw[thick] (6,.5) circle (1); 
\draw[blue] (6,-.5)--(6,1.5);
\draw[blue] (5,.5)--(7,.5);
\draw[red] (5.293,-.207)--(6.707,1.207);
\draw[red] (5.293,1.207)--(6.707,-.207);
\filldraw (6,.5) circle (2pt);
\node at (7.25,.52){$\scriptstyle F^0$};
\node at (4.77,.52){$\scriptstyle F^2$};
\node at (6,1.7){$\scriptstyle F^1$};
\node at (6,-.72){$\scriptstyle F^3$};
\node at (6.6, .68){$\scriptscriptstyle U_1^0$};
\node at (6.3,1.1){$\scriptscriptstyle U_2^0$};
\node at (6.1,.2) {$\scriptscriptstyle \what{V}$};
\draw[ ->] (7.75,.5) arc (0:30:1.75);
\node at (8.1,1){$\scriptstyle \faktor{\Z}{4\Z}$};
\end{tikzpicture}
    \caption{A schematic for the fundamental domain $F$ is shown on the left and on the right, a schematic for $\Mtil_4$. The central black circles represent $V$ and $\what{V}$, respectively. The dihedral angle between adjacent red and blue rays is $\pi/2$.}
    \label{fig:fundamental domain}
\end{figure}

At a point $p\in V$, the metric $g$ on $M$ can be written as:
\[dr^2+\sinh^2(r)d\theta^2+\cosh^2(r)dx^2\]
where $(r,\theta)$ are coordinates on a disk orthogonal to $V$ through $p$, and $dx^2$ is the metric on $\Hy^{n-2}$. These coordinates are valid for $0\leq \theta\leq 2\pi$ and $0\leq r< \rho$, the normal injectivity radius of $V$. The pullback $h_k=\psi_k^*(g)$ is a singular Riemannian metric  on $\Mtil_k$, but away from $\what{V}$, $h_k$ is locally isometric to $\Hy^n$.  Along $\what{V}$, for $0\leq r< \rho$, the circumference of a orthogonal transverse disk of radius $r$ is $2\pi k\sinh(r)$. 

\begin{lemma}[\cite{GromovThurston87}, Remark 3.6]\label{lem:Not Homotopy Equivalent}
For each $k>1$, $\Mtil_k$ is not homotopy equivalent to a manifold of constant curvature $\kappa\equiv -1$.
\end{lemma}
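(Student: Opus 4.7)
The plan is to argue by contradiction using Mostow--Prasad rigidity together with the $\Z_k$-symmetry coming from the branched covering structure. Suppose for some $k > 1$ that $\Mtil_k$ is homotopy equivalent to a closed hyperbolic $n$-manifold $N$. The smoothed metric on $\Mtil_k$ (which has been constructed earlier in this section) gives $\Mtil_k$ strictly negative sectional curvature, so its universal cover is diffeomorphic to $\R^n$ and $\Mtil_k$ is aspherical. Thus the homotopy equivalence realizes $\pi_1(\Mtil_k)$ as a torsion-free cocompact lattice $\Gamma = \pi_1(N) \leq \Isom(\Hy^n)$, and Mostow rigidity (applicable since $n \geq 4$) moreover gives $\Out(\Gamma) \cong \Isom(N)$, which is finite.

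Next I would analyse the deck transformation group $\Z_k$ of the branched cover $\psi_k \colon \Mtil_k \to M$. This acts on $\Mtil_k$ by homeomorphisms with fixed set $\what{V}$. For $k > 1$ this action is not homotopic to the identity: using the Lefschetz trace formula, the fixed-point contribution is $\chi(\what{V}) = \chi(V)$ whereas the cyclic branched-cover formula gives $\chi(\Mtil_k) = k\chi(M) - (k-1)\chi(V)$, and these agree only in the degenerate case $\chi(M) = \chi(V)$, which one rules out for the $I_n$-manifolds under consideration; in odd dimensions one falls back on the nontrivial permutation action on the classes $[\what{W}_1^j] \in H_{n-1}(\Mtil_k;\Q)$. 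Consequently, the deck action induces a nontrivial homomorphism $\Z_k \to \Out(\Gamma) \cong \Isom(N)$, producing a nontrivial isometric $\Z_k$-action on $N$ whose fixed set is a codimension-2 totally geodesic submanifold.

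The central step is to upgrade the homotopy equivalence $\Mtil_k \to N$ to one that is $\Z_k$-equivariant with respect to the two actions. Granted this, passing to the quotient yields a homotopy equivalence of orbifolds $M = \Mtil_k/\Z_k \simeq N/\Z_k$. In particular the orbifold fundamental groups are isomorphic, giving $\pi_1(M) \cong \pi_1^{\mathrm{orb}}(N/\Z_k) = \til{\Gamma}_k$, where $\til{\Gamma}_k$ fits into a short exact sequence $1 \to \Gamma \to \til{\Gamma}_k \to \Z_k \to 1$. However $\til{\Gamma}_k$ contains $\Z_k$-torsion, since any point of the $\Z_k$-fixed locus of $N$ lifts to a point in $\Hy^n$ with stabiliser $\Z_k$ in $\til{\Gamma}_k$, whereas $\pi_1(M)$ is torsion-free because $M$ is a closed hyperbolic manifold. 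This is the desired contradiction.

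The main obstacle is the equivariant rigidity step, namely making the homotopy equivalence genuinely $\Z_k$-equivariant so that the induced map on quotients is a homotopy equivalence of orbifolds rather than merely of underlying spaces. This rests on Mostow's identification $\Out(\Gamma) \cong \Isom(N)$ combined with standard equivariant obstruction theory for maps between aspherical spaces with compatible finite group actions. One also has to verify that the induced isometric $\Z_k$-action on $N$ has fixed set corresponding to $\what{V}$ and of the expected codimension; this is where the structure of $V$ as a totally geodesic codimension-2 submanifold of the original hyperbolic manifold $M$, together with the fact that the deck action fixes $\what{V}$ pointwise, plays the decisive role.
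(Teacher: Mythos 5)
Your set-up is on the right track and in fact parallels the authors' own remark after the lemma: invoke Mostow rigidity to make the deck group act isometrically on the putative hyperbolic manifold $N \simeq \Mtil_k$. (For the preliminary step, you do not need the Lefschetz/Euler-characteristic digression: Borel's lemma says that any effective finite group action on a closed aspherical manifold with torsion-free, centerless $\pi_1$ injects into $\Out(\pi_1)$, which immediately handles the map $\Z_k \hookrightarrow \Out(\Gamma) \cong \Isom(N)$.) The equivariant-rigidity upgrade is also standard, as you indicate.

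The final step, however, contains a genuine error. You identify $\pi_1(M)$ with $\pi_1^{\mathrm{orb}}(\Mtil_k/\Z_k) \cong \til\Gamma_k$ and then derive a contradiction from the torsion in $\til\Gamma_k$. But the quotient $\Mtil_k/\Z_k$, \emph{as an orbifold}, is not $M$ with its trivial orbifold structure; it is $M$ equipped with singular locus $V$ of cyclic order $k$. Its orbifold fundamental group $\til\Gamma_k$ is a $\Z_k$-extension of $\pi_1(\Mtil_k)$, which merely surjects onto $\pi_1(M)$ (one kills the meridians rather than their $k$-th powers); it is not isomorphic to $\pi_1(M)$. Indeed $\pi_1(\Mtil_k)$ cannot sit inside $\pi_1(M)$ for any $k>1$, as one sees already from covolume considerations. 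Moreover, the torsion you extract in $\til\Gamma_k$ is entirely expected: the $\Z_k$-action on the hyperbolic $\Mtil_k$ has fixed points, so the resulting orbifold group must have torsion, and this produces no tension with $\pi_1(M)$ being torsion-free. So at this point you have merely rederived that $(M, V, k)$ would carry a hyperbolic orbifold structure with cone angle $2\pi/k$ along $V$, and you still need an actual obstruction. This is precisely where the Gromov--Thurston argument (Remark 3.6 of \cite{GromovThurston87}) comes in: one compares this hypothetical cone structure with the genuine hyperbolic metric on $M$, typically via a volume count (Schl\"afli monotonicity of cone volume in the cone angle, together with Gromov proportionality and the bound $\Vert \Mtil_k\Vert \leq k\Vert M\Vert$ for branched covers). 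Without some comparison of this kind, no contradiction is reached, so as written the proof does not establish the lemma.
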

\begin{rmk}
Remark 3.6 of \cite{GromovThurston87} only states that $\Mtil_k$ does not admit a metric with $\kappa\equiv-1$. However, the existence of the $D_{2k}$-action on $\Mtil_k$ together with Mostow rigidity allows the same argument to go through just assuming $\Mtil_k$ is homotopy equivalent to a manifold of constant negative curvature.

\end{rmk}

In \cite{GromovThurston87}, it is shown that one can then smooth  $h_k$ to obtain a metric $g_k$ which agrees with $h_k$ outside of the $\rho$-neighborhood of $V$, and which can be described in a $\rho$ neighborhood of $V$ by \[dr^2+\sigma^2(r)d\theta^2+\cosh^2(r)dx^2\] where $\sigma(r)$ is a function that smoothly interpolates between $\sinh(r)$ and $k\sinh(r)$ on the interval $[r_0,\rho]$ for some $0<r_0<\rho$. The function $\sigma$ also satisfies other properties \cite[Lemma 2.1]{GromovThurston87} to ensure that $g_k$ is still negatively curved:

\begin{lemma}[\cite{GromovThurston87}, Lemma 2.4]\label{lem:curvature of Mk}
Let $\rho$ be the normal injectivity radius of $V\subset M$.  There exists a constant $C(k,\rho)>1$ such that the curvature of $(\Mtil_k,g_k)$ satisfies
\[-C(k,\rho)\leq\kappa(g_k)\leq\frac{-1}{C(k,\rho)}\]
Moreover,  $C(k,\rho)\rightarrow 1$ as $\rho\rightarrow \infty$ for fixed $k$.
\end{lemma}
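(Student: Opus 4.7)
The strategy is to compute the sectional curvatures of the warped product metric $g_k = dr^2 + \sigma(r)^2 d\theta^2 + \cosh(r)^2 h$ in the tubular neighborhood $[0,\rho] \times S^1 \times \Hy^{n-2}$ of $\what{V}$, where $h$ denotes the hyperbolic metric on $\Hy^{n-2}$. Outside of this tube, $g_k$ coincides with the locally hyperbolic pullback $h_k$, so sectional curvatures there are identically $-1$. Inside the tube, I would work in an orthonormal frame $\{ \partial_r, e_\theta, e_1, \ldots, e_{n-2} \}$ adapted to the multiply warped product structure, with $e_\theta = \sigma(r)^{-1} \partial_\theta$ and $e_i = \cosh(r)^{-1} \tilde{e}_i$ for an $h$-orthonormal frame $\{\tilde{e}_i\}$ on $\Hy^{n-2}$.

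The first observation is that every sectional curvature not involving $e_\theta$ is automatically $-1$: the restriction of $g_k$ to any slice $\theta = \mathrm{const}$ is the standard hyperbolic metric in Fermi coordinates about a totally geodesic $\Hy^{n-2}$, so the warped product formula gives $K(\partial_r, e_i) = -\cosh''(r)/\cosh(r) = -1$ and $K(e_i, e_j) = (-1 - \sinh^2(r))/\cosh^2(r) = -1$. The only two genuinely variable sectional curvatures are
\[ K(\partial_r, e_\theta) = -\frac{\sigma''(r)}{\sigma(r)}, \qquad K(e_\theta, e_i) = -\frac{\sigma'(r)}{\sigma(r)} \cdot \frac{\sinh(r)}{\cosh(r)}, \]
and both evaluate to $-1$ whenever $\sigma(r) = c\sinh(r)$ for a constant $c$; in particular they take the value $-1$ at the two endpoints $r = r_0$ and $r = \rho$ of the interpolation interval, which is exactly what is needed for $g_k$ to glue $C^\infty$-smoothly to the ambient hyperbolic metric.

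Given this reduction, the pinching statement follows by invoking the conditions on $\sigma$ from Lemma~2.1 of \cite{GromovThurston87}: strict logarithmic convexity $\sigma \sigma'' > (\sigma')^2 > 0$ forces both displayed curvatures into the interior of $(-\infty, 0)$, and continuity on the compact interval $[r_0, \rho]$ then produces a constant $C(k,\rho) > 1$ with $-C(k,\rho) \leq \kappa(g_k) \leq -1/C(k,\rho)$. The main obstacle, and where the bulk of the work lies, is the asymptotic $C(k, \rho) \to 1$ as $\rho \to \infty$. My plan is to write $\sigma = \phi \cdot \sinh$ with $\phi$ interpolating smoothly from $1$ to $k$ across $[r_0, \rho]$, and then expand both $\sigma''/\sigma$ and $(\sigma'/\sigma)\tanh(r)$ in terms of $\phi, \phi', \phi''$, $\coth(r)$, and $\tanh(r)$. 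As $\rho$ grows, one has the freedom to rescale the $\phi$-profile so that $\phi'$ and $\phi''$ become uniformly small, while $\tanh(r)$ and $\coth(r)$ simultaneously approach $1$ throughout the transition region. A quantitative estimate along these lines should yield the convergence $C(k,\rho)\to 1$, which is the delicate content of the lemma.
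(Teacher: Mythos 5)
The paper states this lemma with a citation to Gromov--Thurston (Lemma~2.4 of \cite{GromovThurston87}) and does not prove it, so there is no internal proof to compare against. That said, your strategy is precisely theirs: work in the tube around $\what{V}$ where $g_k$ is the doubly warped product $dr^2 + \sigma(r)^2\,d\theta^2 + \cosh^2(r)\,h$ and compute sectional curvatures directly. Your coordinate-plane formulas $K(\partial_r, e_\theta) = -\sigma''/\sigma$, $K(e_\theta, e_i) = -(\sigma'/\sigma)\tanh r$, and $K(\partial_r, e_i)=K(e_i,e_j)=-1$ are correct, and the observation that outside the interpolation region $g_k$ is locally hyperbolic is also right.

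There is a small gap in the reduction step and a more serious one in the conclusion. On the reduction: noting that the $\theta=\text{const}$ slices are totally geodesic copies of $\Hy^{n-1}$ only handles planes \emph{entirely} tangent to a slice. To bound $\kappa(g_k)$ over all $2$-planes you need the fact that, in the adapted orthonormal frame, the curvature tensor of this doubly warped product over a one-dimensional base is diagonal (that is, $R_{abcd}=0$ unless $\{a,b\}=\{c,d\}$), so that for orthonormal $u,v$ one has $K(u,v)=\sum_{a<b}(u^av^b-u^bv^a)^2\,K(e_a,e_b)$, a convex combination of the coordinate-plane curvatures. This is standard but must be invoked, since it is what converts control of your four displayed quantities into control of the full sectional curvature. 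On the conclusion: the real content of Lemma~\ref{lem:curvature of Mk} is the asymptotic $C(k,\rho)\to 1$; the existence of some $C$ is immediate from continuity and compactness of $[r_0,\rho]$. Writing $\sigma=\phi\sinh$ gives
\[
K(\partial_r,e_\theta) = -1 - (\log\phi)'' - \bigl((\log\phi)'\bigr)^2 - 2(\log\phi)'\coth r,
\]
\[
K(e_\theta,e_i) = -1 - (\log\phi)'\tanh r,
\]
and you correctly identify the heuristic: spread the transition $\phi\colon 1\to k$ over a long interval $[r_0,\rho]$ with $r_0$ itself large, making $(\log\phi)'$ and $(\log\phi)''$ uniformly small while $\tanh r$ and $\coth r$ approach $1$. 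But you stop at a plan; you have not exhibited an interpolant $\sigma$ satisfying the smoothness and convexity constraints and then estimated the resulting curvature, which is exactly where the quantitative work of \cite{GromovThurston87} lies. Without that construction and estimate, the convergence $C(k,\rho)\to 1$ is asserted rather than proved.
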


By choosing $\Gamma$ to be a sufficiently deep finite index subgroup of $\Gamma_n$, one can ensure that $\rho$ is arbitrarily large. After rescaling the metric $g_k$, Lemma \ref{lem:curvature of Mk} now implies:
\begin{lemma}[\cite{GromovThurston87}]\label{lem:epsilon curvature}For every $\epsilon>0$ and every $k>1$ there exists an $I_n$-manifold $M$ and a branched cover $(\Mtil_k,g_k)$ of $M$ constructed as above such that \[-1-\epsilon\leq \kappa(g_k)\leq -1\]
\end{lemma}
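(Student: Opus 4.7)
The strategy is to combine Lemma \ref{lem:curvature of Mk} with a rescaling argument, after first arranging the normal injectivity radius $\rho$ of $V\subset M$ to be as large as needed. Fix $k>1$ and $\epsilon>0$. Since $\Gamma_n$ is a linear group, it is residually finite, so we can choose a torsion-free normal subgroup $\Gamma\trianglelefteq\Gamma_n$ of finite index whose injectivity radius in its action on $\Hy^n$ is arbitrarily large; in particular, $\Gamma$ can be chosen so that the totally geodesic codimension-$2$ submanifold $V\subset M=\Hy^n/\Gamma$ has prescribed large normal injectivity radius $\rho$. (Concretely, one can intersect $\Gamma_n$ with successive principal congruence subgroups and use that $V$ is the image of a fixed totally geodesic copy of $\Hy^{n-2}$ that is preserved by the commuting involutions $r_1,r_2$.)

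Next, apply Lemma \ref{lem:curvature of Mk} to this choice of $(M,V)$. It produces a smooth metric $g_k$ on $\widetilde{M}_k$ and a constant $C=C(k,\rho)>1$ such that
\[
-C\leq \kappa(g_k)\leq -\tfrac{1}{C},
\]
with $C(k,\rho)\to 1$ as $\rho\to\infty$. Now rescale the metric, setting $g_k':= \tfrac{1}{C}\, g_k$. Under this rescaling, sectional curvatures are multiplied by $C$, so
\[
-C^2\leq \kappa(g_k')\leq -1.
\]
Choose $\rho$ large enough (via the previous paragraph) so that $C(k,\rho)^2\leq 1+\epsilon$. Then $g_k'$ satisfies
\[
-1-\epsilon\leq \kappa(g_k')\leq -1,
\]
which is exactly the conclusion; after relabelling $g_k'$ as $g_k$, we are done.

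The only nonroutine point is the first step, namely producing an $I_n$-manifold $M$ in which the particular $V$ constructed from the reflections $r_1,r_2$ has arbitrarily large normal injectivity radius. This however follows from the standard residual finiteness argument for arithmetic lattices: pulling back $\Gamma$ by deeper and deeper congruence subgroups drives the global injectivity radius of $M$ to infinity, and the normal injectivity radius of any totally geodesic submanifold is bounded below by (half) this global quantity. With that in hand, the rest is the rescaling computation above, together with the monotonicity $C(k,\rho)\to 1$ provided by Lemma \ref{lem:curvature of Mk}.
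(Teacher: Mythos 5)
Your proposal is correct and follows the same route the paper takes: the authors state this lemma immediately after asserting ``By choosing $\Gamma$ to be a sufficiently deep finite index subgroup of $\Gamma_n$, one can ensure that $\rho$ is arbitrarily large. After rescaling the metric $g_k$, Lemma~\ref{lem:curvature of Mk} now implies:'' and your argument fleshes out exactly this sketch, including the correct rescaling computation $g_k'=\tfrac{1}{C}g_k$ multiplying sectional curvatures by $C$. One small caveat on an aside: the parenthetical claim that the normal injectivity radius of an arbitrary totally geodesic submanifold is bounded below by half the global injectivity radius of $M$ is not a correct general principle in codimension $\geq 2$ (a deck transformation can carry a lift $\widetilde V$ close to itself while still having large displacement), but this does not affect the proof, since the substantive assertion you need --- that passing to sufficiently deep congruence subgroups of the arithmetic lattice $\Gamma_n$ drives $\rho$ to infinity --- is correct and is the same unproved assertion the paper relies on.
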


We next construct a totally geodesic, codimension-1 submanifold in $(\Mtil_k,g_k)$ that is locally isometric to $\Hy^{n-1}$.  Let $r_1^0$ be the lift of $r_1$ to $\Mtil_k$ that sends $F^0$ to itself and fixes $\what{U}_1^0$.  When $k$ is even, $r_1^0$ also fixes $\what{U}_1^{\frac{k}{2}}$, and sends $F^{\frac{k}{2}}$ to itself. When $k$ is odd, $r_0^1$ sends $F^{\frac{k-1}{2}}$ to $F^{\frac{k+1}{2}}$, and fixes $U_2^{\frac{k-1}{2}}$. Define
\[\what{W}=\left\{\begin{array}{cl}
\what{U}_1^0\cup \what{U}_2^{\frac{k-1}{2}}, & k~\text{odd}\\
\what{U}_1^0\cup \what{U}_1^{\frac{k}{2}}, &k~\text{even}
\end{array}\right.
\]

\begin{proposition}\label{prop:W is totally geodesic}
The hypersurface $\what{W}$ is totally geodesic in $(\Mtil_k,g_k)$ and locally isometric to $\Hy^{n-1}$.
\end{proposition}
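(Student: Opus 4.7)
The plan is to realise $\what{W}$ as the fixed-point set of an isometric involution on $(\Mtil_k,g_k)$ and then read off the induced metric directly from the warped-product description of $g_k$ near $\what{V}$. The key observation is that the smoothing function $\sigma$ enters $g_k$ only through the $d\theta^2$ coefficient, so it drops out when we restrict to an angular slice. To begin, I verify that $r_1^0$ is a global isometry of $g_k$. Outside the $\rho$-neighbourhood of $\what{V}$, $g_k$ coincides with the pulled-back hyperbolic metric $h_k$ and $r_1^0$ is a local lift of the isometric involution $r_1$ of $M$, hence a local isometry. Inside the $\rho$-neighbourhood, $g_k$ has the form $dr^2+\sigma(r)^2\,d\theta^2+\cosh^2(r)\,dx^2$ in warped coordinates about $\what{V}$, and the lifted reflections generating $D_{2k}$ act by fixing $r$ and $x$ and reflecting $\theta$ about an axis; since $\sigma$ depends only on $r$, any such $\theta$-reflection preserves $g_k$.

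Next I show that $\what{W}$ is exactly the fixed-point set of $r_1^0$. By construction, $r_1^0$ is the lift of $r_1$ preserving $F^0$ and fixing $\what{U}_1^0$. When $k$ is even, $r_1^0$ also preserves $F^{k/2}$ and fixes $\what{U}_1^{k/2}$; when $k$ is odd, $r_1^0$ swaps $F^{(k-1)/2}$ and $F^{(k+1)/2}$, and its fixed locus there is $\what{U}_2^{(k-1)/2}$. In either case, in the warped coordinates near $\what{V}$ the two half-hyperplanes sit at antipodal angles (at $\theta=0$ and $\theta=\pi$, after a shift of origin) and fit together through $\what{V}$ into a single smooth codimension-$1$ submanifold whose point-set is precisely $\what{W}$. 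Because the fixed-point set of a smooth isometric involution on a Riemannian manifold is totally geodesic wherever it is a submanifold, $\what{W}$ is totally geodesic in $(\Mtil_k,g_k)$.

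It remains to identify the induced metric on $\what{W}$. Outside the $\rho$-neighbourhood of $\what{V}$, $\what{W}$ is a local lift of the totally geodesic hyperbolic hypersurface $W_1\subset M$, so it is locally isometric to $\mathbb{H}^{n-1}$. Inside the $\rho$-neighbourhood, $\what{W}$ is the locus $\theta\in\{0,\pi\}$, so $d\theta$ vanishes along $\what{W}$ and the induced metric reduces to
\[ dr^2+\cosh^2(r)\,dx^2, \]
where $r$ is now signed across the two antipodal components. This is precisely the warped product $\mathbb{H}^{n-2}\times_{\cosh^2}\mathbb{R}$, which by Proposition~\ref{prop:WarpedDecomposition}(1) is isometric to $\mathbb{H}^{n-1}$. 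The smoothing function $\sigma$ never appears, because it multiplies $d\theta^2$ which is zero on $\what{W}$; thus the cone-type singularity of $h_k$ along $\what{V}$ never propagates to $\what{W}$. The only real technical point is the coordinate bookkeeping near $\what{V}$, namely that the two antipodal pieces of $\what{W}$ glue smoothly through $\what{V}$ into a single $C^\infty$-submanifold, and this is guaranteed by the rotational invariance of $g_k$ in the $\theta$-variable.
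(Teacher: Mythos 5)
Your proposal is correct and follows essentially the same route as the paper: verify that $r_1^0$ is an isometry of $(\Mtil_k,g_k)$ using the rotational symmetry of $g_k$ near $\what{V}$, invoke the fixed-point-set-of-an-isometry criterion to get total geodesy, and observe that the induced metric on $\what{W}$ omits the $\sigma(r)^2\,d\theta^2$ term (equivalently, agrees with the restriction of $h_k$), which identifies it with the warped product $\Hy^{n-2}\times_{\cosh^2}\mathbb{R}\cong\Hy^{n-1}$. Your write-up merely makes explicit two points the paper treats tersely -- that $r_1^0$ is a global isometry in both regions and that $\what{W}$ is precisely the fixed locus -- but there is no difference in the underlying argument.
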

\begin{proof}
Since $g_k$ is rotationally symmetric near $\what{V}$, the involution $r_1^0$ is also an isometry of $(\Mtil_k,g_k)$. In particular, $\what{W}$ must be totally geodesic by Theorem 1.10.15 of \cite{Klingenberg95}. 

For the second part, observe that for constant $\theta$ the restriction of $g_k$ to $\what{W}$ agrees with the restriction $h_k$ to $\what{W}$, \emph{i.e.} locally isometric to $\Hy^{n-1}$.
\end{proof}

\begin{proof}[Proof of Theorem \ref{thm:MainGromovThurstonExamples}] 
Fix any $k\geq2$ and let $\epsilon_i\to 0$ be any sequence of positive real numbers. By Lemma \ref{lem:epsilon curvature}, for each $\epsilon_i$ we find an $I_n$-manifold $M_i'$ such that the $k$-fold branched cover $\Mtil_{i,k}$ of $M_i'$ admits a metric $g_{i,k}$ whose curvature satisfies $-1-\epsilon_i\leq \kappa(g_{i,k})\leq-1$. By Lemma \ref{lem:Not Homotopy Equivalent}, no $\Mtil_{i,k}$ is homotopy equivalent to a manifold of constant curvature $\kappa\equiv-1$, hence by Corollary \ref{cor:QIequalsHE}, we know that $\pi_1(\Mtil_{i,k})$ is not quasi-isometric to $\Hy^n$. 

By Proposition \ref{prop:W is totally geodesic}, each $\Mtil_{i,k}$ contains a totally geodesic codimension-1 submanifold $\what{W}_{i,k}$ that is locally isometric to $\Hy^{n-1}$.  It follows from the curvature bound and Lemma \ref{lem:CurvatureBoundyieldsNormalGrowthBound} that the normal growth exponent of $\what{W}_{i,k}$ in $\Mtil_{i,k}$ is at most $1+\epsilon_i$. Setting $M_i=\Mtil_{i,k}$ and $N_i=\what{W}_{i,k}$ as in the statement of the theorem completes the proof.
\end{proof}

\bibliography{Codim1bib}
\bibliographystyle{alpha}

\end{document}